\newtheorem{theorem}{Theorem}[section]
\newtheorem{definition}{Definition}[section]
\newtheorem{lemma}{Lemma}[section]
\newtheorem{proposition}{Proposition}[section]
\newtheorem{remark}{Remark}[section]
\newtheorem{corollary}{Corollary}[section]
\newcommand{\Sph}{\mathbb{S}}
\def \C {\mathbb{C}}
\def \R {\mathbb{R}}
\def \H {\mathbb{H}}
\def \N {\mathbb{N}}
\def \S {\mathcal{S}}
\def \Cay {\mathcal{C}}
\def \n {\nabla}
\def \d {\textnormal{d}}
\def \Lap {\triangle}
\newcommand{\ol}[1]{\overline{#1}}
\newcommand{\thetaH}[1]{\theta^{\H}_{#1}}
\newcommand{\thetaS}[1]{\theta^{\mathbb{S}}_{#1}}
\begin{document}

\title{Singular CR structures of constant Webster curvature and applications}

\author{Chiara Guidi$^{(1)}$ \& Ali Maalaoui$^{(2)}$ \& Vittorio Martino$^{(3)}$}
\addtocounter{footnote}{1}
\footnotetext{Dipartimento di Matematica, Universit\`a di Bologna, piazza di Porta S.Donato 5, 40126 Bologna, Italy. E-mail address:
{\tt{chiara.guidi12@unibo.it}}}
\addtocounter{footnote}{1}
\footnotetext{Department of mathematics and natural sciences, American University of Ras Al Khaimah, PO Box 10021, Ras Al Khaimah, UAE. E-mail address:
{\tt{ali.maalaoui@aurak.ae}}}
\addtocounter{footnote}{1}
\footnotetext{Dipartimento di Matematica, Universit\`a di Bologna, piazza di Porta S.Donato 5, 40126 Bologna, Italy. E-mail address:
{\tt{vittorio.martino3@unibo.it}}}

\date{}
\maketitle

\vspace{5mm}

{\noindent\bf Abstract} {\small We consider the sphere $\Sph^{2n+1}$ equipped with its standard CR structure. In this paper we construct explicit contact forms on $\Sph^{2n+1}\setminus \Sph^{2k+1}$, which are conformal to the standard one and whose related Webster metrics have constant Webster curvature; in particular the curvature is positive if $2k< n-2$. As main applications, we provide two perturbative results. In the first one we prove the existence of infinitely many contact structures on $\Sph^{2n+1}\setminus \tau(\Sph^{1})$ conformal to the standard one and having constant Webster curvature, where $\tau(\Sph^{1})$ is a small perturbation of $\Sph^1$. In the second application, we show that there exist infinitely many bifurcating branches of periodic solutions to the CR Yamabe problem on $\Sph^{2n+1}\setminus \Sph^{1}$ having constant Webster curvature. }

\vspace{5mm}

\noindent
{\small Keywords: Conformal geometry, singular contact structures, singular Yamabe problem}

\vspace{4mm}

\noindent
{\small 2010 MSC. Primary: 58J60, 58J05.  Secondary: 58E05, 58E07 .}

\vspace{4mm}

%%%%%%%%%%%%%%%%%%%%%%%%%%%%%%%%%%%%%%%%%%%%%%%%%%%%%%%%%%%%%%%%%%%%%%%%%%%%%%%%%%%%%%%%%%%%%%%%%%%%%%%%%%%%%%%%%%%%%%%%%%%%%%%%%%%%
%%%%%%%%%%%%%%%%%%%%%%%%%%%%%%%%%%%%%%%%%%%%%%%%%%%%%%%%%%%%%%%%%%%%%%%%%%%%%%%%%%%%%%%%%%%%%%%%%%%%%%%%%%%%%%%%%%%%%%%%%%%%%%%%%%%%
%%%%%%%%%%%%%%%%%%%%%%%%%%%%%%%%%%%%%%%%%%%%%%%%%%%%%%%%%%%%%%%%%%%%%%%%%%%%%%%%%%%%%%%%%%%%%%%%%%%%%%%%%%%%%%%%%%%%%%%%%%%%%%%%%%%%
%%%%%%%%%%%%%%%%%%%%%%%%%%%%%%%%%%%%%%%%%%%%%%%%%%%%%%%%%%%%%%%%%%%%%%%%%%%%%%%%%%%%%%%%%%%%%%%%%%%%%%%%%%%%%%%%%%%%%%%%%%%%%%%%%%%%

\section{Introduction  and statement of the results}

\noindent
Let $n \geq 1$, we consider the sphere $\Sph^{2n+1}$ equipped with its standard (flat) CR contact structure $\thetaS{n}$. The related Webster metric $g_{\thetaS{n}}$ has constant Webster scalar curvature $\S_{\thetaS{n}}=4n^2+4n$. The existence of conformal contact forms on the sphere having constant curvature is the standard CR Yamabe problem, which has been addressed by Jerison and Lee and many other authors (see \cite{jerison-lee 1987, jerison-lee 1988}). \\
As in the Riemannian case, one is then interested in the existence of CR contact structures on non-compact manifolds, which carry a (complete) Webster metric having constant Webster curvature. In the Riemannian case, this question has been deeply studied. In fact, one finds two directions in the literature. The first one addresses the case of negative constant scalar curvature, see for instance \cite{Loewner-Nirenberg 1974, Aviles-McOwen 1985, Aviles-McOwen 1988, Aviles-McOwen 2 1988}.\\
The second case addresses metrics of positive constant scalar curvature, starting by the pioneering works of Schoen and Yau \cite{Schoen-Yau 1988} and Schoen \cite{Schoen 1988}: in particular, when considering a subset $\Lambda$ on the standard sphere $\Sph^n$, it is proved that if $\Sph^{n}\setminus \Lambda$ carries a complete metric with positive scalar curvature then a bound on the dimension of $\Lambda$ holds. More precisely, $2 \textit{dim}(\Lambda) \leq n-2$; moreover explicit examples are given of complete conformally flat metrics with constant positive scalar curvature on special domains $\Lambda$.\\
These results have been widely used and generalized in various directions: see for instance \cite{MP}, \cite{MP2},\cite{MS}, \cite{BPS}, \cite{LPZ}, and the references therein. In fact, one can prove the existence of complete conformally flat metrics with constant positive scalar curvature on $\Sph^{n}\setminus \Lambda$ where $\Lambda$ is a perturbation of some special sets, namely the equatorial spheres $\Sph^{k}\subseteq \Sph^{n}$ (\cite{MS}); also, by means of the theory of bifurcation, one can show the existence of periodic solutions to the standard Yamabe problem on $\Sph^{n}\setminus \Sph^{1}$ (\cite{BPS}): in these kind of results, the starting point is the knowledge of explicit complete conformally flat metrics with constant positive scalar curvature on the special manifolds $\Sph^{n}\setminus \Sph^{k}$.\\
In this paper we will show the existence of explicit complete conformally flat CR structures on $\Sph^{2n+1}\setminus \Sph^{2k+1}$, whose related metrics have constant Webster curvature; in particular the curvature is positive if $2k < n-2$.\\
Our construction mimics the one in the Riemannian case. In fact, we first project stereographically (by means of the Cayley transform, which is a conformal transformation) the standard sphere $\Sph^{2n+1}$ to the Heisenberg group $\H^n$ endowed with its standard contact form $\thetaH{n}$, in such a way that the equatorial sphere $\Sph^{2k+1}$ is mapped into the subgroup $\H^k$. Then in the complementary set we use polar coordinates, so that (with some abuse of notation, which will be explained in details in the following sections) we have the product manifold  $\H^n=\H^k\times\R^+\times\Sph^{2N+1}$, endowed with the contact form $\thetaH{n}=2r^2\thetaS{N}+\thetaH{k}$; here $n=k+N+1$, and $r$ is the variable of the polar coordinates in $\R^+$. At this point, we have the following:

\begin{theorem}\label{thm: explicit singular contact form}
Let us define the following contact form $\theta_{k,N}:=\thetaS{N}+\frac{1}{2r^2}\thetaH{k}$ on $\H^{n}\setminus \H^{k} \simeq \Sph^{2n+1}\setminus \Sph^{2k+1}$. It holds that $\theta_{k,N}$ is conformal to the standard CR contact structure $\thetaS{n}$ of $\Sph^{2n+1}$. Moreover, the related Webster metric is complete and it has constant Webster scalar curvature $\S_{\theta_{k,N}}=4(n+1)(n-2k-2)$. In particular we have that $\S_{\theta_{k,N}}$ is  positive for $2k < n-2$.
\end{theorem}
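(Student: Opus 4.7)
The starting point is the algebraic identity
\[
\theta_{k,N} \;=\; \thetaS{N} + \frac{1}{2r^2}\,\thetaH{k} \;=\; \frac{1}{2r^2}\bigl(2r^2\,\thetaS{N} + \thetaH{k}\bigr) \;=\; \frac{1}{2r^2}\,\thetaH{n},
\]
which follows at once from the semi-product decomposition of $\thetaH{n}$ recalled in the introduction. Since the Cayley transform is a CR equivalence between $\Sph^{2n+1}$ minus a point and $\H^n$ that realizes $\thetaS{n}$ as a conformal rescaling of $\thetaH{n}$, and since it maps $\Sph^{2k+1}$ onto $\H^k$ together with the point at infinity, this identity immediately provides the conformality of $\theta_{k,N}$ with $\thetaS{n}$ on $\Sph^{2n+1}\setminus \Sph^{2k+1}$.

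To compute the Webster curvature I would invoke the standard CR conformal transformation formula of Jerison--Lee: writing $\theta_{k,N} = u^{2/n}\thetaH{n}$ with $u(r) = (2r^2)^{-n/2}$ and using that $\thetaH{n}$ is Webster flat, $\S_{\theta_{k,N}}$ is expressed as a universal dimensional multiple of $\Lap_b u\cdot u^{-(n+2)/n}$. The core calculation is thus the sub-Laplacian of a purely radial function in the semi-product decomposition $\H^n\setminus\H^k \simeq \H^k\times\R^+\times\Sph^{2N+1}$ with $n=k+N+1$. The $\H^k$-factor and the tangential $\Sph^{2N+1}$-directions annihilate $u$, while the $(2N+2)$-dimensional transverse horizontal block contributes a Euclidean-type radial operator $\partial_r^2 + \tfrac{2N+1}{r}\partial_r$. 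Plugging in $u = c\,r^{-n}$ yields $\Lap_b u = n(n-2N)\,c\,r^{-n-2}$, and a short simplification, together with the substitution $n=k+N+1$, produces precisely $\S_{\theta_{k,N}} = 4(n+1)(n-2k-2)$, which is positive exactly when $2k<n-2$.

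For completeness of the Webster metric I would verify that neither the singular locus $\H^k$ (approached as $r\to 0^+$) nor infinity can be reached by a finite-length curve. Near $r=0$ the conformal factor $(2r^2)^{-1}$ blows up quadratically, so the length element along the radial direction behaves like $dr/r$, whose integral diverges both at $r\to 0^+$ and at $r\to\infty$; analogous horizontal estimates along the $\H^k$-fibres and the $\Sph^{2N+1}$-directions rule out the remaining escape routes.

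The principal technical obstacle is the explicit derivation of the sub-Laplacian of a radial function in these semi-product polar coordinates: one must verify how the horizontal distribution and the Levi form of $\thetaH{n}$ split in the $\H^k\times\R^+\times\Sph^{2N+1}$ picture, check that no mixed terms survive for a purely radial function, and arrive at the correct radial coefficient $2N+1$, since the final constant $4(n+1)(n-2k-2)$ depends sensitively on it.
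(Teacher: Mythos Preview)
Your argument is correct and considerably more economical than the paper's own proof of this theorem, which proceeds by a direct curvature computation: after the change of variable $s=\ln r$, the authors build an explicit $g_{\theta_{k,N}}$-orthonormal frame $\{X_0,Y_0,X_i,Y_i,T,U_j,V_j\}$ on $\H^k\times\R\times\Sph^{2N+1}$, compute all the Lie brackets and Levi-Civita covariant derivatives by hand, and then sum the diagonal Ricci entries via formula~\eqref{eq: Webster scalar curvature} to obtain $\S_{\theta_{k,N}}=4(n+1)(n-2k-2)$. Your route---write $\theta_{k,N}=u^{2/n}\thetaH{n}$ with $u=(2r^2)^{-n/2}$, use the conformal transformation law~\eqref{eq: id1} together with $\S_{\thetaH{n}}=0$, and reduce everything to the computation of $\Lap_{\thetaH{n}}$ on a radial function---bypasses that entire frame calculus. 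It is worth noting that the paper itself essentially uses your shortcut later, in the proof of the proposition in Section~4, where they compute $\Lap_{\hat\theta}(r^{-n})$ and recover the constant $4(n+1)(n-2k-2)$ modulo $O(r)$ for the perturbed structure; so your strategy is fully consistent with their framework. One small caveat: with the paper's normalization of the horizontal frame (the $1/\sqrt{2}$ in the definition of $X_j^{\thetaH{n}},Y_j^{\thetaH{n}}$), the radial sub-Laplacian on $\H^n$ is $\tfrac{1}{2}\bigl(\partial_r^2+\tfrac{2N+1}{r}\partial_r\bigr)$ rather than the bare Euclidean expression; this factor of $\tfrac{1}{2}$ is exactly compensated by the $2$ in the conformal factor $u=(2r^2)^{-n/2}$, so the final constant is unaffected, but you should track it explicitly to avoid a spurious factor of $2$. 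The completeness argument and the conformality via Cayley are the same in both treatments.
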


\noindent
To the best of our knowledge, this is the first result in this direction. Now some remarks are in order. First we notice that our construction works fine for the odd dimensional equatorial spheres $\Sph^{2k+1}$; we are not able to handle the even dimensional case with this strategy. Another interesting feature, which seems to be different from the Riemannian case, is the following. In the classic case one can see the product $\R^n=\R^k\times\R^+\times\Sph^{N}=\mathcal{H}^{k+1}\times\Sph^{N}$, where $\mathcal{H}^{k+1}$ is the standard hyperbolic space which in turn can be identified with the unit ball in $\R^{k+1}$ equipped with the Poincar\'{e} metric, having negative constant sectional curvatures. For the CR case, in literature there exists a standard hyperbolic Heisenberg group $\H^k\times\R^+$, which can be seen as the upper half Siegel domain in $\C^{k+1}$ or equivalently as the unit ball in $\C^{k+1}$ equipped with the K\"{a}hler Bergman metric, having negative constant holomorphic curvatures (see for instance \cite{FGMT}). Now, if one tries to write the product $\H^k\times\R^+\times\Sph^{2N+1}$ endowed with the contact form $\theta_{k,N}$ as a product of a sort of hyperbolic Heisenberg group times the sphere $\Sph^{2N+1}$, this gives rise to a different model, that is: the contact structure $J$ associated to $\theta_{k,N}$ \emph{mixes} vector fields from the Heisenberg group $\H^k$ and the sphere $\Sph^{2N+1}$; this will be clear from the explicit construction in the following sections.\\
With these explicit contact structures in hands, as applications we will prove two perturbative results. The first one is analogous to a result proved by Mazzeo and Smale in \cite{MS}, which gives existence of CR contact structures having constant Webster curvature by means of a small perturbation of the singular set. More precisely, we have:

\begin{theorem}\label{thm: perturbation}
Let $\tau : \Sph^{2n+1}\to \Sph^{2n+1}$ be a smooth diffeomorphism which is close to the identity. Then there exists an infinite family of contact structures on $\Sph^{2n+1}\setminus \tau(\Sph^{1})$ conformal to the standard one in $\Sph^{2n+1}$, having complete Webster metric with constant Webster scalar curvature equals to $\S_{\theta_{0,n-1}}$.
\end{theorem}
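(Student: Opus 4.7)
The plan is to run a perturbation argument anchored at the explicit singular solution provided by Theorem~\ref{thm: explicit singular contact form} with $k=0$ and $N=n-1$. Write $\theta_{0,n-1}=u_0^{2/n}\thetaS{n}$ with $u_0>0$ blowing up along $\Sph^1$. Finding a contact form on $\Sph^{2n+1}\setminus\tau(\Sph^1)$ conformal to $\thetaS{n}$ with the prescribed constant Webster scalar curvature $\S_{\theta_{0,n-1}}$ amounts to solving the CR Yamabe equation for a positive function $u$ on $\Sph^{2n+1}\setminus\tau(\Sph^1)$ with the correct singular profile along $\tau(\Sph^1)$. I would first construct an approximate solution $u_\tau$ by transporting $u_0$ through $\tau$ (suitably corrected by a conformal Jacobian so that the asymptotics along $\tau(\Sph^1)$ are preserved), and check that $u_\tau$ solves the equation with an error of size $O(\|\tau-\mathrm{id}\|)$ in weighted function spaces adapted to the Heisenberg geometry around the singular set.

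The second step is to analyze the linearization $L_0:=-c_n\Lap_{b,\theta_{0,n-1}}-\tfrac{n+2}{n}\S_{\theta_{0,n-1}}$ of the Yamabe operator at $u_0$. Here one exploits the product representation $\H^0\times\R^+\times\Sph^{2n-1}$ underlying $\theta_{0,n-1}$: Fourier transform in the $\H^0$ direction together with spherical-harmonic decomposition on $\Sph^{2n-1}$ reduce $L_0$ to a family of radial ODEs in $r$, from which the indicial roots and the mapping properties of $L_0$ on weighted spaces can be read off in the spirit of Mazzeo's edge calculus. One then performs a Lyapunov--Schmidt reduction around the kernel of $L_0$, generated by infinitesimal CR automorphisms preserving $\Sph^1$ together with the non-decaying modes along the $\H^0$ factor, and applies the implicit function theorem to correct $u_\tau$ to an exact solution for each parameter in a chosen complement of the kernel.

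The infinite family of contact structures on $\Sph^{2n+1}\setminus\tau(\Sph^1)$ then arises by varying these parameters: distinct choices yield genuinely inequivalent solutions because $\tau$ breaks the ambient CR symmetries that would otherwise relate them. The main obstacle I foresee is the Fredholm analysis of $L_0$ on the singular, sub-Riemannian manifold: constructing weighted H\"older or Sobolev spaces compatible with the Heisenberg dilation structure near $\Sph^1$, and ruling out resonances between the chosen weight exponent and the indicial roots of the Heisenberg sub-Laplacian, is considerably more delicate than in the Riemannian setting of Mazzeo--Smale, because of the anisotropic scaling of $\H^n$ and the non-ellipticity (in the Euclidean sense) of $\Lap_b$.
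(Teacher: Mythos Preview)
Your overall strategy---transport the explicit solution, linearize, study the linearization on weighted spaces, and close by the implicit function theorem---matches the paper's Section~4.1. The decisive step you are missing, however, is a reduction that dissolves precisely the obstacle you flag at the end.

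The paper does \emph{not} attempt a sub-Riemannian edge theory for $\Lap_{\theta_{0,n-1}}$. Instead it restricts from the outset to functions invariant under the Reeb field $T$ of $\theta_{0,n-1}$ (the Hopf circle action on the $\Sph^{2n-1}$ factor). On such functions the cross term $-4e^{2s}T\partial_t$ and the $T^2$ term in the explicit formula for $\Lap_{\theta_{0,n-1}}$ vanish, and after the substitution $r=e^{2s}$ the linearized operator becomes
\[
L_1=\Delta_{\Sph^{2n-1}}+4\Delta_{\mathcal{H}^2}+2(n-2),
\]
a genuinely \emph{elliptic} operator on the Riemannian product $\Sph^{2n-1}\times\mathcal{H}^2$, where $\mathcal{H}^2$ is the ordinary hyperbolic plane in the half-plane coordinates $(t,r)$. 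This identification is the crux: it turns the anisotropic Heisenberg problem into a classical one, and the weighted H\"older spaces $C^{k,\alpha,\nu}(\Sph^{2n-1}\times\mathcal{H}^2)$, the indicial-root computation (done in the disk model, with roots $\gamma_j^{\pm}=\tfrac12\pm\tfrac12\sqrt{1+\lambda_j-2(n-2)}$), and the surjectivity of $L_1$ for $\nu<\tfrac12$ are then quoted directly from Mazzeo--Smale. Your proposed ``Fourier transform in the $\H^0$ direction plus spherical harmonics'' would miss this: without killing the $T$-dependence the operator retains the mixed term and does not factor through a Riemannian Laplacian, and the Fredholm theory you worry about would indeed be delicate. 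Once the reduction is made, the infinite family of solutions is produced exactly as you say, by the implicit function theorem parametrized over the (infinite-dimensional) bounded kernel $\mathcal{K}(\alpha,\nu)$ of $L_1$.
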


\noindent
The second application is about the existence of periodic solutions to the CR Yamabe equations, as in \cite{BPS}, which is obtained by using the theory of bifurcation.

\begin{theorem}\label{thm: bifurcation}
Assume that  $n\geq 3$. There exist infinitely many branches of periodic solutions to the CR Yamabe problem on $\Sph^{2n+1}\setminus \Sph^{1}$ having constant Webster curvature, arbitrary close to $\S_{\theta_{0,n-1}}$.
\end{theorem}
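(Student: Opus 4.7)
The plan is to use the explicit singular Webster metric $\theta_{0,n-1}$ from Theorem~\ref{thm: explicit singular contact form} as a trivial branch of constant-curvature solutions, and to detect bifurcations by spectral analysis of the linearized CR Yamabe operator on a compactification of the radial coordinate.

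First I would exploit the symmetries of $\theta_{0,n-1}$. With $k=0$ in Theorem~\ref{thm: explicit singular contact form}, the singular manifold $\Sph^{2n+1}\setminus \Sph^{1}$ is realized as $\R\times\R^{+}\times\Sph^{2n-1}$ (coordinates $t_{0}$, $r$, $\omega$) with contact form $\thetaS{n-1}+\frac{1}{2r^{2}}\thetaH{0}$. After the change $r=e^{\rho}$ the contact structure is invariant under $\rho$-translations, so for each $T>0$ I may pass to the quotient $M_{T}$ obtained by identifying $\rho\sim \rho+T$, and restrict to functions independent of $t_{0}$. On $M_{T}$ the descended contact form has constant Webster scalar curvature $\S_{\theta_{0,n-1}}=4(n+1)(n-2)>0$ (thanks to $n\geq 3$), and $u\equiv 1$ is a solution of the CR Yamabe equation
\begin{equation*}
-b_{n}\Lap_{b}u+\S_{\theta_{0,n-1}}u=\S_{\theta_{0,n-1}}u^{(n+2)/n}
\end{equation*}
for every $T$. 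These pairs $(T,1)$ form the trivial branch on which the bifurcation is to be performed.

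Second I would linearize about the trivial branch. Differentiating the right-hand side at $u=1$ yields the Jacobi operator
\begin{equation*}
L_{T}=-b_{n}\Lap_{b}-\tfrac{2}{n}\,\S_{\theta_{0,n-1}}
\end{equation*}
on $M_{T}$. Since $\Lap_{b}$ is translation-invariant in $\rho$ on $t_{0}$-independent functions, Fourier expansion in $\rho$ combined with the spectral decomposition of the transverse sub-Laplacian on $\Sph^{2n-1}$ yields a discrete spectrum of the schematic form
\begin{equation*}
\lambda_{m,j}(T)=b_{n}\Bigl[\Bigl(\tfrac{2\pi m}{T}\Bigr)^{2}+\alpha_{j}\Bigr]-\tfrac{2}{n}\,\S_{\theta_{0,n-1}},
\end{equation*}
with $m\in\mathbb{Z}$ and $\alpha_{j}\geq 0$ the eigenvalues contributed by the $\Sph^{2n-1}$ factor (up to constant shifts determined by the Webster connection of $\theta_{0,n-1}$). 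Because the threshold $\tfrac{2}{n}\S_{\theta_{0,n-1}}$ is strictly positive, for each fixed $j$ and every $m\neq 0$ the function $T\mapsto \lambda_{m,j}(T)$ is strictly decreasing and crosses zero at a unique critical period $T_{m,j}$, with the $T_{m,j}$ accumulating only at $+\infty$.

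Third I would invoke a classical bifurcation argument. Working in the symmetry class of $t_{0}$-invariant and $U(n)$-invariant functions, the crossings are generically simple and Crandall--Rabinowitz provides a local branch of nontrivial solutions emanating from each $(T_{m,0},1)$; more generally the jump-of-Morse-index criterion of Smoller--Wasserman used in the Riemannian case in \cite{BPS} guarantees a bifurcating branch at every $T_{m,j}$. Pulling back via $\rho=\log r$, each such branch yields a family of periodic solutions to the CR Yamabe equation on $\Sph^{2n+1}\setminus \Sph^{1}$ with constant Webster curvature arbitrarily close to $\S_{\theta_{0,n-1}}$ by continuity at the bifurcation point, and distinctness of the $T_{m,j}$ rules out collapse of branches. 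The main obstacle will be the second step: writing out the sub-Laplacian of $\theta_{0,n-1}$ in the mixed polar/Heisenberg coordinates, confirming the separation-of-variables structure claimed above, and establishing Fredholm and regularity properties of $L_{T}$ on Folland--Stein spaces adapted to $M_{T}$. Once this spectral and functional-analytic picture is in place, transversality of each crossing is immediate from the explicit $T$-dependence of $\lambda_{m,j}$, and the bifurcation theorem applies to produce the infinite family of branches.
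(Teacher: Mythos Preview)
Your approach contains a genuine gap at the second step, and it cannot be repaired without changing the bifurcation parameter. The contact form $\theta_{0,n-1}=\thetaS{n-1}+\tfrac12e^{-2\rho}\,dt$ is \emph{not} invariant under $\rho$-translations (only under the combined maps $(t,\rho)\mapsto(e^{2c}t,\rho+c)$, which are hyperbolic isometries of the upper half-plane), so there is no contact quotient $M_T$. More concretely, the sub-Laplacian on Reeb-invariant, $t_0$-independent functions is
\[
L=\Delta_{\Sph^{2n-1}}+\partial_\rho^{\,2}-2\,\partial_\rho,
\]
and the first-order term $-2\partial_\rho$ is forced by the Webster volume density $e^{-2\rho}d\rho$, which is not $\rho$-periodic. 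Hence $L$ is not symmetric on $T$-periodic functions with respect to any periodic measure; on the Fourier mode $e^{i\mu\rho}$, $\mu=2\pi m/T$, one obtains the complex value $-\mu^2-2i\mu$, not $-\mu^2$. Your displayed formula for $\lambda_{m,j}(T)$ is therefore incorrect. The only way for the Jacobi operator $-L-2(n-2)$ to annihilate a periodic mode is $\mu=0$ together with $\alpha_j=2(n-2)$, independently of $T$; there are no eigenvalue crossings as $T$ varies, and neither Crandall--Rabinowitz nor a Morse-index jump argument produces bifurcation from the family $(T,1)$.

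The paper avoids this by observing that, after $r=e^{2s}$, the $(t,s)$-part becomes the hyperbolic Laplacian on the upper half-plane, so the reduced problem lives on $\C P^{n-1}\times\mathcal H^{2}$. Rather than quotienting by a cyclic group (which would give a noncompact hyperbolic cylinder with continuous spectrum), one quotients $\mathcal H^{2}$ by a cocompact Fuchsian group $\Gamma$ to obtain a closed hyperbolic surface $\Sigma$, and takes the bifurcation parameter to be the hyperbolic metric $g\in\mathcal M(\Sigma)$ rather than a period $T$. On $\Sigma$ the Jacobi operator $-\Delta_{\C P^{n-1}}-4\Delta_{\Sigma,g}-2(n-2)$ is genuinely self-adjoint with discrete spectrum, and the spectral results of \cite{BPS} on hyperbolic surfaces force jumps of the Morse index along paths in $\mathcal M(\Sigma)$, triggering the bifurcation theorem of \cite{LPZ}. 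The ``periodicity'' in the statement refers to $\Gamma$-periodicity on $\mathcal H^{2}$, not to a single real period.
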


\section{Definitions and notation}

\noindent
We recall here some well known facts for further references and in order to fix our notations.\\
Let $(M^{2n+1},\theta)$, $M^{2n+1}\subset \C^{n+1}$ be $2n+1$ dimensional contact manifold with contact form $\theta$ and Reeb vector field $T$ (i.e. the unique vector field satisfying $\theta(T)=1$ and $\d\theta(T,\cdot)=0$). We set $g_{\theta}$, the Webster metric, which is a Riemmanian metric associated to $\theta$, and a $(1,1)$-tensor $\phi$ satisfying:
\begin{equation}\label{eq: metric associated to theta}
g_{\theta}(T,X)=\theta(X),\quad g_{\theta}(X,Y)=-\frac{1}{2}d\theta(X,\phi Y),\quad \phi\phi X=-X+\theta(X)T.
\end{equation}
We define $J=\phi|_{\text{ker}(\theta)}$ (see \cite{Tan89}). If $g_{\theta}$ is a Riemmanian metric associated to $\theta$, then $(M,\theta, g_{\theta}, \phi)$ is called a contact Riemmanian manifold. We denote by $\Lap_{g_{\theta}}$ the metric Laplacian and we consider the operator
\begin{equation*}
\Lap_{\theta}=\Lap_{g_{\theta}}-T^2.
\end{equation*}
If $\left\{T,X_1,\dots,X_n,Y_1,\dots,Y_n\right\}$ is an orthomormal basis for the tangent space, such that $Y_i=JX_i$ for every $i=1,\dots, n$, then the Webster scalar curvature $\S_{\theta}$ is given by
\begin{equation}\label{eq: Webster scalar curvature}
\S_{\theta}=\sum_{j=1}^{n}\left(Ric_{g_{\theta}}(X_j,X_j)+Ric_{g_{\theta}}(Y_j,Y_j)\right)+4n.
\end{equation}
here we have denoted the Ricci tensor by $Ric_{g_{\theta}}$.
Let $(M,\theta, g_{\theta}, \phi)$ be a contact Riemmanian manifold and let $u$ be a positive function on $M$, we consider a new manifold $(M,\tilde\theta, \tilde g_{\theta}, \tilde\phi)$, where $\tilde{\theta}$ is the contact form defined by
\begin{equation*}
\tilde{\theta}=u^{p-2}\theta, \quad \quad p=\frac{2n+2}{n},
\end{equation*}
with $\phi$ and $\tilde{\phi}$ acting in the same way on $\text{ker}(\theta)=\text{ker}(\tilde\theta)$. The scalar curvatures $\S_{\theta}$ and $\S_{\tilde{\theta}}$ are related by the following identity (see \cite{Tan89})
\begin{equation}\label{eq: id1}
-\Lap_{\theta}u+\frac{n}{4(n+1)}\S_{\theta}u=\frac{n}{4(n+1)}\S_{\tilde{\theta}} u^{p-1}.
\end{equation}

\noindent
Now let $\H^n\simeq \R\times \C^n\simeq\R\times\R^{2n}$ be the Heisenberg group. We denote the coordinates by
$$w=(t,z)=(t,x_1,y_1,\dots,x_{2n}, y_{2n})$$
and the group law
\begin{equation*}
w\cdot  w'=(t,z)\cdot(t',z')=(t+t'+2\text{Im}(z\ol{z'}), z+z')\quad\forall\;  w,w'\in\H^n,
\end{equation*}
where $\text{Im}(\cdot)$ denotes the imaginary part of a complex number and $z\ol{z'}$ is the standard Hermitian inner product in $\C^n$. Left translations on $\H^n$ are defined by
\begin{equation*}
\tau:\H^n\to\H^n \qquad \tau_{w}(w')=w\cdot w'\quad \forall\; w\in\H^n
\end{equation*}
and dilations are
\begin{equation*}
\delta_{\lambda}:\H^n\to\H^n \qquad \delta_{\lambda}(t,z)=(\lambda^2  t,\lambda z)\quad \forall\;\lambda>0.
\end{equation*}
We denote by $Q=2n+2$ the homogeneous dimension of $\H^n$ with respect to $\delta_{\lambda}$. On $\H^n$ we consider the  standard contact form
\begin{equation*}
\thetaH{n}=\text{d}t+2\sum_{j=1}^{n}(x_j\d y_j-y_j\d x_j).
\end{equation*}
The canonical orthonormal basis (with respect to $g_{\thetaH{n}}$) of left invariant vector fields on $\H^n$ is
\begin{equation*}
X_j^{\thetaH{n}}=\frac{1}{\sqrt{2}}\left(\frac{\partial}{\partial x_j}+2 y_j\frac{\partial}{\partial t}\right),\quad Y_j^{\thetaH{n}}=-\frac{1}{\sqrt{2}}\left(\frac{\partial}{\partial y_j}-2x_j\frac{\partial}{\partial t}\right),\quad T^{\thetaH{n}}=\frac{\partial}{\partial t}, \quad j=1,\dots,n.
\end{equation*}\\
We set for every $j=1,\dots,n$
\begin{equation}\label{eq: action of J on Hn}
\begin{split}
\phi^{\thetaH{n}} \left(X_{j}^{\thetaH{n}}\right)&=Y_{j}^{\thetaH{n}}\\
\phi^{\thetaH{n}}\left(\frac{\partial}{\partial x_j}\right)&=-\frac{\partial}{\partial y_j}+2x_j\frac{\partial}{\partial t}\\
\phi^{\thetaH{n}}\left(\frac{\partial}{\partial y_j}\right)&=\frac{\partial}{\partial x_j}+2y_j\frac{\partial}{\partial t}\\
\phi^{\thetaH{n}}\left(\frac{\partial}{\partial t}\right)&=0
\end{split}
\end{equation}

\noindent
Now let $\mathbb{S}^{2n+1}\subseteq \C^{n+1}$ be the unit sphere
$$S^{2n+1}=\{\zeta\in\C^{n+1}:\; |\zeta|=1\}.$$
We denote by $\thetaS{n}$ its standard contact form
\begin{equation*}
\thetaS{n}=\sum_{j=1}^{n+1}(u_j\d v_j-v_j\d u_j), \qquad \text{with }\zeta_j=u_j+iv_j
\end{equation*}
and by $g_{\thetaS{n}}$ the related standard metric. Then the Reeb vector field is
\begin{equation*}
T^{\thetaS{n}}=\sum_{j=1}^{n+1} -v_j\frac{\partial}{\partial u_j}+u_j\frac{\partial}{\partial v_j}
\end{equation*}
and the Webster scalar curvature is
\begin{equation*}
\S_{\thetaS{n}}=4n^2+4n.
\end{equation*}

\noindent
The Cayley transform identifies the Heisenberg group with the unit sphere minus a point. More precisely, for $P_S\in\Sph^{2n+1}$, $P_S=(0,\dots,0,-1)$ the Cayley transform is $\Cay:\H^n\to \Sph^{2n+1}\setminus \{P_s\}$
\begin{equation*}
\Cay(t,z)=(\zeta_1,\dots,\zeta_{n+1})=\left(\frac{2 z}{1+|z|^2+it},\frac{1-|z|^2-it}{1+|z|^2+it}\right)
\end{equation*}
or equivalently
\begin{equation*}
\Cay(t,x_1,y_1,\dots,x_n,y_n)=(u_1,v_1\dots,u_{n+1},v_{n+1})
\end{equation*}
with
\begin{equation*}
\begin{split}
u_j&=2\frac{x_j(1+|z|^2)-ty_j}{t^2+(1+|z|^2)^2},\quad v_j=2\frac{tx_j+(1+|z|^2)y_j}{t^2+(1+|z|^2)^2}, \quad j=1,\dots,n \\
u_{n+1}&=\frac{1-|z|^4-t^2}{t^2+(1+|z|^2)^2},\quad v_{n+1}=\frac{2t}{t^2+(1+|z|^2)^2}.
\end{split}
\end{equation*}
Then the contact forms $\thetaH{n}$ and $\thetaS{n}$ are related by the following identity
\begin{equation}\label{eq:relation theta sphere and H}
\Cay^*\thetaS{n}=\frac{2}{t^2+(1+|z|^2)^2}\thetaH{n}.
\end{equation}\\
It the sequel we will need the inverse of $\Cay$, that is $\Cay^{-1}:\;\Sph^{2n+1}\setminus\{P_S\}\to \H^{n}$
\begin{equation*}
\Cay^{-1}(\zeta_1,\dots,\zeta_{n+1})=(t,z_1,\dots,z_n)=\left(\text{Re}\left(i\frac{1-\zeta_{n+1}}{1+\zeta_{n+1}}\right),\frac{\zeta_1}{1+\zeta_{n+1}},\dots,\frac{\zeta_n}{1+\zeta_{n+1}}\right).
\end{equation*}
or equivalently
\begin{equation*}
\Cay^{-1}(u_1,v_1\dots,u_{n+1},v_{n+1})=(t,x_1,y_1,\dots,x_n,y_n)
\end{equation*}
where
\begin{equation*}
%\begin{split}
t=\frac{2v_{n+1}}{v_{n+1}^2+(1+u_{n+1})^2}, \quad x_j=\frac{u_j(1+u_{n+1})+v_jv_{n+1}}{v_{n+1}^2+(1+u_{n+1})^2}, \quad y_j=\frac{v_j(1+u_{n+1})-u_jv_{n+1}}{v_{n+1}^2+(1+u_{n+1})^2}
%\end{split}
\end{equation*}
with $j=1,\dots,n$.

\section{Explicit construction of the singular contact structure}
\noindent
Here we will construct an explicit contact form $\theta_{k,N}$ on $\Sph^{2n+1}\setminus \Sph^{2k+1}$ which will be conformal to the standard CR contact structure $\thetaS{n}$ of $\Sph^{2n+1}$, having complete Webster metric and constant Webster scalar curvature.\\
First of all we transform the problem on $\Sph^{2n+1}$ into a problem on $\H^n$ using the Cayley transform. In $\C^{n+1}$ we choose coordinates so that the equatorial sphere $\Sph^{2k+1}$ is defined by
\begin{equation*}
\Sph^{2k+1}:=\{\zeta\in \C^{n+1}\; : \zeta=(\zeta_1,\dots,\zeta_k,0,\dots,0,\zeta_{n+1}),\quad |\zeta|=1\}\subseteq \Sph^{2n+1},
\end{equation*}
then we stereographically project $\Sph^{2n+1}$ using $\Cay^{-1}$. Notice that, with this choice of coordinates, the sphere $\Sph^{2k+1}$ is projected down into $\H^k$, so now we consider $\H^{n}$ endowed with the standard contact form $\thetaH{n}$ and we split
\begin{equation*}
\H^{n}\simeq \R\times \C^{n}\simeq \R\times \R^{2k}\times \R^{2(n-k)}\simeq \H^k\times \R^{2(n-k)}
\end{equation*}
with coordinates
\begin{equation*}
(t,z_1,\dots,z_{n})\simeq (t,x_1,y_1,\dots,x_{2n},y_{2n})\simeq (t,x_1,y_1,\dots,x_{k},y_{k},\hat z)
\end{equation*}
where $z_j=x_j+iy_j$, $j=1,\dots,n$. Then, let us set $n-k=N+1$,  and $M=\H^k\times\R\times\Sph^{2N+1} \subseteq \H^k\times\R\times\R^{2(N+1)}$ and the map $\varphi:\; \H^n\to M$
\begin{equation}\label{eq: def varphi}
\varphi(t,x_1,y_1,\dots,x_{k},y_{k},\hat z)=\left(t,x_1,y_1,\dots, x_k,y_k,s,\xi_1,\eta_1,\dots,\xi_{N+1},\eta_{N+1}\right)
\end{equation}
which is the identity on $t,\; x_i,\; y_i,$ for $i=1,\dots,k$ and
\begin{equation*}
\begin{split}
s=\ln(|\hat{z}|), \quad \xi_{j}=\frac{x_{k+j}}{|\hat{z}|},\quad \eta_{j}=\frac{ y_{k+j}}{|\hat{z}|}\quad j=1,\dots,N+1.
\end{split}
\end{equation*}\noindent
On $M$ we consider the contact form
\begin{equation*}
 \theta_{k,N}:=\thetaS{N}+\frac{e^{-2s}}{2}\thetaH{k}.
\end{equation*}
The following Proposition shows the relationship between $\theta_{k,N}$, $\thetaH{n}$ and $\thetaS{n}$
\begin{proposition} Using the notation above we have
\begin{equation*}
(\varphi^{-1})^*\thetaH{n}=2e^{2s}\theta_{k,N}
\end{equation*}
and
\begin{equation}\label{eq:relation theta sphere and nk}
(\varphi^{-1}\circ \Cay)^*\thetaS{n}= \frac{4e^{2s}}{t^2 + \left(1+\sum_{i=1}^k(x_i^2 + y_i^2)+ e^{2s}\right)^2} \theta_{k,N}.
\end{equation}
\end{proposition}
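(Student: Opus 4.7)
The plan is to compute $\varphi^*\theta_{k,N}$ directly in the coordinates of $\H^n$, and then deduce the second identity by composing with the Cayley transform and invoking \eqref{eq:relation theta sphere and H}.

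First, I would split the standard contact form on $\H^n$ along the product structure: writing $\H^n \simeq \H^k \times \R^{2(N+1)}$ with coordinates $(t,x_1,y_1,\dots,x_k,y_k,\hat z)$, one has
\begin{equation*}
\thetaH{n} \;=\; \thetaH{k} \;+\; 2\sum_{j=k+1}^{n}\bigl(x_j\,\d y_j - y_j\,\d x_j\bigr),
\end{equation*}
where in this expression $\thetaH{k}=\d t + 2\sum_{j=1}^{k}(x_j\,\d y_j - y_j\,\d x_j)$ involves only the first $k$ complex coordinates and $t$. Since $\varphi$ is the identity on $(t,x_1,y_1,\dots,x_k,y_k)$, one immediately has $\varphi^*\thetaH{k}=\thetaH{k}$.

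Next, I would pull back $\thetaS{N} = \sum_{j=1}^{N+1}(\xi_j\,\d\eta_j - \eta_j\,\d\xi_j)$ through the substitutions $\xi_j = x_{k+j}/|\hat z|$ and $\eta_j = y_{k+j}/|\hat z|$. The key observation is that in the antisymmetric combination
\begin{equation*}
\xi_j\,\d\eta_j - \eta_j\,\d\xi_j \;=\; \frac{x_{k+j}\,\d y_{k+j} - y_{k+j}\,\d x_{k+j}}{|\hat z|^2} \;+\; \frac{-x_{k+j}y_{k+j}+y_{k+j}x_{k+j}}{|\hat z|^3}\,\d|\hat z|,
\end{equation*}
the radial $\d|\hat z|$ contribution (equivalently $\d s$) cancels identically. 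Summing over $j$ and using $e^{2s} = |\hat z|^2$ yields
\begin{equation*}
\varphi^*\thetaS{N} \;=\; \frac{1}{|\hat z|^2}\sum_{j=k+1}^{n}\bigl(x_j\,\d y_j - y_j\,\d x_j\bigr) \;=\; \frac{e^{-2s}}{2}\bigl(\thetaH{n}-\thetaH{k}\bigr).
\end{equation*}
Combining with $\varphi^*\thetaH{k}=\thetaH{k}$ gives
\begin{equation*}
\varphi^*\theta_{k,N} \;=\; \varphi^*\thetaS{N} + \tfrac{e^{-2s}}{2}\thetaH{k} \;=\; \tfrac{e^{-2s}}{2}\thetaH{n},
\end{equation*}
which, upon rearranging and passing to the inverse, is precisely $(\varphi^{-1})^*\thetaH{n} = 2e^{2s}\theta_{k,N}$.

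For the second identity I would just compose pullbacks. Starting from \eqref{eq:relation theta sphere and H} on $\H^n$ and using $|z|^2 = \sum_{i=1}^{k}(x_i^2+y_i^2) + |\hat z|^2 = \sum_{i=1}^{k}(x_i^2+y_i^2) + e^{2s}$ after applying $(\varphi^{-1})^*$, one obtains
\begin{equation*}
(\varphi^{-1}\circ\Cay)^*\thetaS{n} \;=\; \frac{2}{t^2 + \bigl(1+\sum_{i=1}^{k}(x_i^2+y_i^2)+e^{2s}\bigr)^2}\,(\varphi^{-1})^*\thetaH{n},
\end{equation*}
and substituting $(\varphi^{-1})^*\thetaH{n} = 2e^{2s}\theta_{k,N}$ produces the claimed formula. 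The only real step is the cancellation of the radial term in $\xi_j\,\d\eta_j-\eta_j\,\d\xi_j$; everything else is bookkeeping of the product splitting and a direct composition.
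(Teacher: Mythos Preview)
Your proposal is correct and follows essentially the same route as the paper: the paper pulls back $\thetaH{n}$ through $\varphi^{-1}$ by computing $(\varphi^{-1})^*\d x_{j+k}=e^{s}\xi_j\d s+e^{s}\d\xi_j$ (and similarly for $y$) and observing the same cancellation of the radial $\d s$ term, while you pull back $\theta_{k,N}$ through $\varphi$ and observe the cancellation of $\d|\hat z|$; the second identity is then obtained in both cases by composing with \eqref{eq:relation theta sphere and H}. The only cosmetic point is that on $\H^n$ the symbol $e^{-2s}$ should strictly be written $|\hat z|^{-2}=\varphi^{*}(e^{-2s})$, but you already note this identification explicitly.
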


\begin{proof}
By straightforward computation we find
\begin{align*}
%(\varphi^{-1})^* \d t &=\d t,\qquad (\varphi^{-1})^* \d x_i=\d x_i,\qquad  (\varphi^{-1})^* \d y_i=\d y_i,\quad i=1,\dots,k\\
(\varphi^{-1})^* \d x_{j+k}&=e^s\xi_j\d s+e^s\d \xi_j,\qquad (\varphi^{-1})^* \d y_{j+k}=e^s\eta_j\d s+e^s\d \eta_j,\quad j=1,\dots,N+1
\end{align*}
hence
\begin{equation}\label{eq: varphithetaHn}
(\varphi^{-1})^*\thetaH{n}=\d t+2\sum_{i=1}^k (x_i\d y_i-y_i\d x_i)+2e^{2s}\sum_{j=1}^{N+1}(\xi_j\d \eta_j-\eta_j\d \xi_j)=\thetaH{k}+2e^{2s}\thetaS{N}.
\end{equation}
Then, equality \eqref{eq:relation theta sphere and nk} follows from  \eqref{eq:relation theta sphere and H} and the identity above.
\end{proof}

\begin{remark}
Let us explicitly note that one can see the contact form $\theta_{k,N}$ defined on $\H^{n}\setminus \H^{k}$ with the singularity along $\H^{k}$, just by letting $r=|\hat{z}|$ (see also formula (\ref{eq: singular radial}) in the sequel). We chose the variable $s=\ln(|\hat{z}|)$ in order to make the computations easier.
\end{remark}

\noindent
From now on we will consider the contact manifold $(M, \theta_{k,N})$, where
$$M=\H^k \times \R \times \Sph^{2N+1}$$
with coordinates
$$(t, x_1, \ldots, x_{k}, y_1, \ldots, y_{k}, s,  \xi_{1}, \ldots, \xi_{N+1}, \eta_{1}, \ldots, \eta_{N+1})=
(t, x, y, s,  \xi, \eta) , \quad |(\xi, \eta)|=1$$
and contact form  $\theta_{k,N}$. Moreover we consider the $(1,1)$-tensor
\begin{equation}\label{eq:phi M}
\phi=\phi^{\theta_{k,N}}=\d\varphi\circ\phi^{\thetaH{n}}\circ \d\varphi^{-1}
\end{equation}
and the metric $g=g_{\theta_{k,N}}$ defined by \eqref{eq: metric associated to theta}. We will show that the Webster scalar curvature $\S_{\theta_{k,N}}$ is constant. In order to compute $\S_{\theta_{k,N}}$ we choose a particular orthonormal basis for $T_pM$. Let us notice that, since $\theta_{k,N}= \thetaS{N}+\frac{e^{-2s}}{2}\thetaH{k}$, the Reeb vector field $T^{\theta_{k,N}}$ of $(M,\theta_{k,N})$ is the Reeb vector field of $(\Sph^{2N+1},\thetaS{N})$, so
\begin{equation*}
T:=T_{\theta_{k,N}}=\sum_{j=1}^{N}\left(-\eta_j\frac{\partial}{\partial \xi_j}+\xi_j\frac{\partial}{\partial\eta_j}\right).
\end{equation*}
We consider the following vector fields in $\text{ker}(\theta_{k,N})$
\begin{equation*}
X_0=\frac{\partial}{\partial s},\qquad Y_0=2e^{2s}\frac{\partial}{\partial t}-T, \qquad X_i=\sqrt{2}e^sX_i^{\thetaH{k}}, \qquad Y_i=\sqrt{2}e^sY_i^{\thetaH{k}},\qquad i=1,\dots, k.\\
\end{equation*}
By straightforward computations we have
\begin{align*}
\d\varphi^{-1}(X_0)&=\sum_{j=1}^N\left(x_{j+k}\frac{\partial}{\partial x_{j+k}}+y_{j+k}\frac{\partial}{\partial y_{j+k}}\right)=\sqrt{2} \sum_{j=1}^N\left(x_{j+k} X_{j+k}^{\thetaH{n}}-y_{j+k} Y_{j+k}^{\thetaH{n}}\right)\\
\d\varphi^{-1}(Y_0)&=2|\hat{z}|^2\frac{\partial}{\partial t}+\sum_{j=1}^N\left(y_{j+k}\frac{\partial}{\partial x_{j+k}}-x_{j+k}\frac{\partial}{\partial y_{j+k}}\right)=\sqrt{2} \sum_{j=1}^N\left(y_{j+k} X_{j+k}^{\thetaH{n}}+x_{j+k} Y_{j+k}^{\thetaH{n}}\right)
\end{align*}
then, recalling the identities $J^{\thetaH{n}} X_{j}^{\thetaH{n}}= Y_{j}^{\thetaH{n}}$ for every $j=1,\dots,n$, the above computations show that
\begin{equation}\label{eq: relation1}
J^{\thetaH{n}}\d \varphi^{-1}(X_0)=\d \varphi^{-1}(Y_0).
\end{equation}
Similarly, for $i=1,\dots,k$ we have
\begin{align*}
\d\varphi^{-1}(X_i)=|\hat{z}|\sqrt{2}X_i^{\thetaH{n}}\quad \text{and}\quad \d\varphi^{-1}(Y_i)=|\hat{z}|\sqrt{2}Y_i^{\thetaH{n}}
\end{align*}
so
\begin{equation}\label{eq: relation2}
J^{\thetaH{n}}\d \varphi^{-1}(X_i)=\d \varphi^{-1}(Y_i),\quad i=1,\dots,k.
\end{equation}

\noindent
Now we notice that the metric and the endomorphism $\phi$ induced from $(M,\theta_{k,N},g, \phi$) on $\Sph^{2N+1}\subseteq M$ are the standard ones. Indeed
\begin{align*}
\d\varphi^{-1}(T)&=\d\varphi^{-1}\left(2e^{2s}\frac{\partial}{\partial t}-Y_0\right)=2|\hat z|^2\frac{\partial}{\partial t}-\sqrt{2}\sum_{j=1}^N\left(y_{j+k} X_{j+k}^{\thetaH{n}}+x_{j+k} Y_{j+k}^{\thetaH{n}}\right)\\
W_j&:=\d\varphi^{-1}\left(\frac{\partial}{\partial \xi_j}+\eta_jT\right)=|\hat z|\frac{\partial}{\partial x_{j+k}}+\frac{y_{j+k}}{|\hat z|}\d\varphi^{-1}(T),\quad j=1,\dots,N+1\\
Z_j&:=\d\varphi^{-1}\left(\frac{\partial}{\partial \eta_j}+\xi_jT\right)=|\hat z|\frac{\partial}{\partial y_{j+k}}-\frac{x_{j+k}}{|\hat z|}\d\varphi^{-1}(T)\quad j=1,\dots,N+1.
\end{align*}
Thus, recalling \eqref{eq: action of J on Hn},
\begin{align*}
\phi^{\thetaH{n}}\left(\d \varphi^{-1}(T)\right)&=-\sqrt{2} \sum_{j=1}^N\left(x_{j+k} X_{j+k}^{\thetaH{n}}-y_{j+k} Y_{j+k}^{\thetaH{n}}\right)=\d\varphi^{-1}(X_0)\\
\phi^{\thetaH{n}}\left(W_j\right)&=-Z_j\quad j=1,\dots,N+1,
\end{align*}
which imply respectively
\begin{align*}
\phi^{\theta_{k,N}}\left(T\right)=X_0\\
\phi^{\theta_{k,N}}\left(\frac{\partial}{\partial \xi_j}+\eta_jT\right)&=-\left(\frac{\partial}{\partial \eta_j}+\xi_jT\right).
\end{align*}
Since the metric and the endomorphism $\phi$ induced on $\Sph^{2N+1}$ from $(M,\theta_{k,N},g)$ are the standard ones, locally, at each point $p\in M \;$ we can consider $2N$ orthonormal geodesic Killing vector fields for $(\Sph^{2N+1},\thetaS{N})$
\begin{equation}\label{eq: def UV}
U_j, \qquad V_j, \qquad j=1,\dots,N
\end{equation}
such that $J^{\thetaH{n}}\d \varphi^{-1}(U_j)=\d \varphi^{-1}(V_j)$ and $U_j,V_j\in\text{ker}\left(\thetaS{N}\right).$\\
We define the set $\mathcal{B}:=\{X_0,Y_0,X_1,\dots,X_k,Y_1,\dots,Y_k,T,U_1,\dots, U_N,V_1,\dots,V_N\}$.

\begin{proposition}
The set $\mathcal{B}$ is an orthonormal basis for $T M$, and $J^{\theta_{k,N}}=\phi^{\theta_{k,N}}|_{\ker{\theta_{k,N}}}$ acts as follows
\begin{equation}\label{eq: action of Jnk}
J^{\theta_{k,N}}X_0=Y_0,\quad J^{\theta_{k,N}}X_i=Y_i,\quad J^{\theta_{k,N}}U_j=V_j.
\end{equation}
\end{proposition}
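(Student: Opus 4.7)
The plan is to reduce each statement to the Heisenberg model via the two relations from the previous proposition: the conformal identity $(\varphi^{-1})^*\thetaH{n}=2e^{2s}\theta_{k,N}$ and the intertwining $\phi^{\theta_{k,N}}=d\varphi\circ\phi^{\thetaH{n}}\circ d\varphi^{-1}$ from \eqref{eq:phi M}. Since the two contact forms are positive multiples of each other, $d\varphi$ identifies the horizontal distributions. Differentiating the conformal relation and evaluating on two kernel vectors annihilates the exact piece, yielding
\[
g_{\theta_{k,N}}(X,Y)=\frac{1}{2e^{2s}}\,g_{\thetaH{n}}\bigl(d\varphi^{-1}(X),d\varphi^{-1}(Y)\bigr)\quad\text{for }X,Y\in\ker\theta_{k,N},
\]
together with $g_{\theta_{k,N}}(T,\cdot)=\theta_{k,N}(\cdot)$ on all of $TM$. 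Since $|\mathcal{B}|=2+2k+2N+1=2n+1=\dim M$, once orthonormality is established $\mathcal{B}$ is automatically a basis.

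A direct check against the definition of $\theta_{k,N}=\thetaS{N}+\tfrac{1}{2}e^{-2s}\thetaH{k}$ shows that every element of $\mathcal{B}\setminus\{T\}$ lies in $\ker\theta_{k,N}$. Using the explicit formulas for $d\varphi^{-1}$ already displayed in the text, one computes $\|d\varphi^{-1}(X_0)\|_{\thetaH{n}}^{2}=\|d\varphi^{-1}(Y_0)\|_{\thetaH{n}}^{2}=2|\hat z|^2=2e^{2s}$ and $g_{\thetaH{n}}(d\varphi^{-1}(X_0),d\varphi^{-1}(Y_0))=0$, while $d\varphi^{-1}(X_i)=\sqrt{2}\,e^s\, X_i^{\thetaH{n}}$ and $d\varphi^{-1}(Y_i)=\sqrt{2}\,e^s\, Y_i^{\thetaH{n}}$ for $i\le k$ land directly in the orthonormal Heisenberg basis. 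The pushforwards of the $\H^k$-block and of the $\{X_0,Y_0\}\cup\{U_j,V_j\}$-block involve disjoint index ranges in $\{X_\ell^{\thetaH{n}},Y_\ell^{\thetaH{n}}\}$, so the two blocks are mutually $g_{\theta_{k,N}}$-orthogonal; within each block the conformal factor $(2e^{2s})^{-1}$ cancels the $g_{\thetaH{n}}$-squared norms exactly.

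For $U_j,V_j$ I would note that an arbitrary $U\in T\Sph^{2N+1}\cap\ker\thetaS{N}$ satisfies both the Euclidean tangency condition $\sum_\ell (\xi_\ell U^{\xi_\ell}+\eta_\ell U^{\eta_\ell})=0$ and the horizontality condition $\thetaS{N}(U)=\sum_\ell(\xi_\ell U^{\eta_\ell}-\eta_\ell U^{\xi_\ell})=0$. A short calculation extending the one already done for $X_0$ shows that these two scalar constraints are exactly what is required to make $d\varphi^{-1}(U)$ orthogonal to $d\varphi^{-1}(X_0)$ and to $d\varphi^{-1}(Y_0)$, respectively. Orthonormality of $\{U_j,V_j\}$ among themselves follows from orthonormality for $g_{\thetaS{N}}$ by the same cancellation of the conformal factor as in the $\H^k$-block. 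Finally, the three identities in \eqref{eq: action of Jnk} are immediate from the intertwining: the cases of $X_0$ and $X_i$ are exactly \eqref{eq: relation1} and \eqref{eq: relation2}, while $J^{\theta_{k,N}}U_j=V_j$ is the defining property \eqref{eq: def UV} of the sphere frame.

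The main obstacle I anticipate is precisely the orthogonality $U_j,V_j\perp\{X_0,Y_0\}$: both the conformal direction $X_0=\partial_s$ and the mixed vector $Y_0=2e^{2s}\partial_t-T$ have nontrivial pushforwards along the Heisenberg sphere directions, so the required cancellations rely on invoking two genuinely different constraints on the sphere frame (Euclidean tangency for orthogonality with $X_0$, and $\thetaS{N}$-horizontality for orthogonality with $Y_0$). Once this is dispatched, everything else is bookkeeping with the identities already displayed in the excerpt.
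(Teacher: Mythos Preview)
Your argument is correct, but it proceeds differently from the paper's proof. The paper checks orthonormality directly on $M$: it writes out $d\theta_{k,N}=d\thetaS{N}-e^{-2s}\,ds\wedge\thetaH{k}+\tfrac{e^{-2s}}{2}\,d\thetaH{k}$ and evaluates $g(Z,W)=-\tfrac{1}{2}\,d\theta_{k,N}(Z,\phi W)$ for each pair (only $g(X_0,X_0)=1$ is displayed as a model computation). The $J$-identities are handled exactly as you do, by citing \eqref{eq: relation1}, \eqref{eq: relation2} and the defining property of $U_j,V_j$. Your route instead transports the question to $\H^n$: from the conformal relation you extract $g_{\theta_{k,N}}(X,Y)=(2e^{2s})^{-1}\,g_{\thetaH{n}}(d\varphi^{-1}X,d\varphi^{-1}Y)$ on horizontal vectors, and then orthonormality reduces to recognizing the pushforwards as a $(\sqrt{2}\,e^{s})$-rescaled orthonormal system in the standard Heisenberg frame. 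This is more conceptual and, as you note, recycles the $d\varphi^{-1}$-computations already displayed before the proposition; it also explains \emph{why} the scalings $\sqrt{2}\,e^s$ on the $\H^k$-block and the mixed definition of $Y_0$ are exactly right. The trade-off is that you must analyse $d\varphi^{-1}$ on the sphere frame and invoke the two separate constraints (Euclidean tangency and $\thetaS{N}$-horizontality) to get $U_j,V_j\perp\{X_0,Y_0\}$, whereas the paper's direct computation on $M$ sidesteps this entirely.
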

\begin{proof} Identities in \eqref{eq: action of Jnk} follows from \eqref{eq: relation1}, \eqref{eq: relation2} and the definition of $U_j$'s and $V_j$'s. Now it is straightforward to check that $\mathcal{B}$ is orthnonormal using the definition of $g$ (see \eqref{eq: metric associated to theta}):
\begin{equation*}
g(Z,W)=-\frac{1}{2}\d\theta_{k,N}(Z,\phi W),\qquad \d\theta_{k,N}=\d\thetaS{N}-e^{-2s}\d s\wedge\thetaH{k}+\frac{e^{-2s}}{2}\d \thetaH{k}
\end{equation*}
$Z,W\in TM.$ We just compute $g(X_0,X_0)$ as an example:
\begin{align*}
g(X_0,X_0)=-\frac{1}{2}\d \theta_{k,N}(X_0,Y_0)=-\frac{1}{2}(-e^{-2s})2e^{2s}=1.
\end{align*}
\end{proof}

\noindent
We will compute the Webster scalar curvature $\S_{\theta_{k,N}}$ with the aid of three lemmas. Let $\n$ be the Levi-Civita connection on $(M,\theta_{k,N},g)$, then we have the following

\begin{lemma}\label{lem: connection coefficients 1}
For every $j=1,\dots, N$ we have
\begin{align*}
&\n_T T=0 			&\quad &\n_{T}U_j=V_j 		&\quad &\n_{T}V_j=-U_j\\
&\n_{U_j}T=-V_j,	&\quad &\n_{U_j}U_j=0,		&\quad &\n_{U_j}V_j=T,\\
&\n_{V_j}T=U_j, 	&\quad &\n_{V_j}U_j=-T,		&\quad &\n_{V_j}V_j=0.
\end{align*}
\end{lemma}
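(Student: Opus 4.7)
The plan is to apply the Koszul formula for the Levi--Civita connection of $(M,g)$ in the orthonormal basis $\mathcal{B}$, exploiting the observation recorded just before the statement that the restrictions of $g$ and $\phi$ to the embedded factor $\Sph^{2N+1}\subset M$ coincide with the standard Sasakian structure on the sphere. Since all nine entries of the lemma involve only $T, U_j, V_j$, which are tangent to this factor, the computation should reduce to an intrinsic one on the Sasakian sphere, provided one first shows that $\Sph^{2N+1}$ is totally geodesic in $M$.

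For the totally geodesic step I would use that pairwise inner products among elements of $\mathcal{B}$ are constant, so Koszul's identity simplifies to
\begin{equation*}
2g(\nabla_X Y,Z)=g([X,Y],Z)-g([X,Z],Y)-g([Y,Z],X).
\end{equation*}
The fields $T, U_j, V_j$ depend only on $(\xi,\eta)$, while $X_0, X_i, Y_i$ depend only on $(s,t,x,y)$; almost all cross-brackets therefore vanish by inspection. The only delicate case is $Z=Y_0=2e^{2s}\partial_t-T$, where $[X,Y_0]=-[X,T]$ for every sphere-tangent $X$, and the remaining Koszul contribution is $g([X,T],Y)+g([Y,T],X)$. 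This combination vanishes for every sphere-tangent $X,Y$ because $T$ is Killing and hence $\mathrm{ad}_T$ is skew-symmetric with respect to $g$. Consequently $\nabla^M_X Y\in T\Sph^{2N+1}$, so $\Sph^{2N+1}$ sits inside $M$ as a totally geodesic submanifold.

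Once the computation is intrinsic to the standard Sasakian sphere, I would invoke the two classical identities
\begin{equation*}
\nabla_X T=-\phi X,\qquad (\nabla_X\phi)Y=g(X,Y)T-\thetaS{N}(Y)X,
\end{equation*}
together with $\phi T=0$, $\phi U_j=V_j$, $\phi V_j=-U_j$, and $\thetaS{N}(U_j)=\thetaS{N}(V_j)=0$. The first yields at once $\nabla_T T=0$, $\nabla_{U_j}T=-V_j$, $\nabla_{V_j}T=U_j$. The geodesic hypothesis gives $\nabla_{U_j}U_j=\nabla_{V_j}V_j=0$, and feeding these into the $\nabla\phi$ identity with $V_j=\phi U_j$ and $U_j=-\phi V_j$ produces $\nabla_{U_j}V_j=T$ and $\nabla_{V_j}U_j=-T$. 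For the last two entries I would use that $U_j$ is Killing: metric compatibility together with the skew-symmetry $g(\phi X,Y)=-g(X,\phi Y)$ gives $g(\nabla_Y U_j,T)=-g(V_j,Y)$, and the Killing equation then yields
\begin{equation*}
g(\nabla_T U_j, Y)=-g(\nabla_Y U_j, T)=g(V_j, Y)\quad\text{for every }Y,
\end{equation*}
hence $\nabla_T U_j=V_j$; the identity $\nabla_T V_j=-U_j$ follows analogously.

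The main technical point I expect is the totally geodesic reduction: the normal distribution to $\Sph^{2N+1}$ in $M$ is not integrable, since the Heisenberg and sphere directions are coupled through the $-T$ term in $Y_0=2e^{2s}\partial_t-T$, and one has to exploit both the precise form of $Y_0$ and the Killing property of $T$ to see the cancellations in Koszul. Once this reduction is in place, the remaining identities are standard Sasakian computations.
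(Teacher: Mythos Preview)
Your argument is correct and complete, but it proceeds along a different route from the paper's. The paper does not make the totally geodesic reduction explicit; it simply asserts that $g(\nabla_T U_j,X_i)=g(\nabla_T U_j,Y_i)=0$ for $i=0,\dots,k$ and focuses on the sphere-tangent components. For those components, instead of invoking the intrinsic Sasakian identities $\nabla_X T=-\phi X$ and $(\nabla_X\phi)Y=g(X,Y)T-\thetaS{N}(Y)X$, the paper works \emph{extrinsically} in $\C^{N+1}$: it uses the ambient complex structure $\tilde J$ (with $\tilde J T=\nu$), the flat connection $\tilde\nabla$, and the second fundamental form $h$ of $\Sph^{2N+1}\subset\C^{N+1}$, which is the identity in the basis $\{T,U_j,V_j\}$. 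A typical step is
\[
g(\nabla_T U_j,V_l)\overset{\text{Killing}}{=}-g(T,\nabla_{V_l}U_j)=-\tilde g(\tilde J T,\tilde\nabla_{V_l}\tilde J U_j)=-\tilde g(\nu,\tilde\nabla_{V_l}V_j)=h(V_l,V_j)=\delta_{jl}.
\]
Your intrinsic approach is cleaner conceptually and, in particular, your handling of the $Y_0$-component via the Killing property of $T$ fills in what the paper leaves implicit. The paper's extrinsic approach, on the other hand, has the virtue of being entirely elementary: it needs only that $h=\mathrm{id}$ on the unit sphere and the Kähler compatibility $\tilde\nabla\tilde J=\tilde J\tilde\nabla$, without appealing to the structure equations of a Sasakian manifold.
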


\begin{proof}
Since $T$, $U_j$ and $V_j$ are geodesic we have $\n_{T} T=0,\;\n_{U_j} U_j=0,\;\n_{V_j} V_j=0$ for every $j=1,\dots, N$. Moreover $U_j$'s are Killing vector fields on $(\Sph^{2N+1},g_{\thetaS{N}})$, so
\begin{equation}\label{eq: Killing}
g(\n_X U_j,Y) +g(X,\n_Y U_j)=0 \text{ for every } X,Y\in T\Sph^{2N+1}.
\end{equation}
\noindent
We denote by $\tilde{J}$ the complex structure on $\C^{N+1}$,  by $\nu$ the outward unit normal to $\Sph^{2N+1}$ and by $\tilde{g},\tilde{\n}$ the standard metric and Levi Civita connection of $\C^{N+1}$. We will us the same notation for the induced metric and connection on $\Sph^{2N+1}$. Then on $T\Sph^{2N+1}\subseteq TM$,  $\tilde{J}T=\nu$ and $\tilde{J}, J^{\theta_{k,N}}$ have the same actions on $\text{ker}{\thetaS{N}}\subseteq\text{ker} \theta_{k,N}$ and $\tilde{g}=g_{\theta_{k,N}}$. Also, we denote by $h(Z,W)=\tilde{g}(\tilde{\n_{Z}}W,-\nu)$, $Z,W\in T\Sph^{2N+1}$, the second fundamental form of $M$ restricted to $\Sph^{2N+1}$. Notice that, with respect to the the basis $\{T,U_1,V_1,\dots,U_N,V_N\}$, the second fundamental form $h$ is the $(2N+1)\times (2N+1)$ identity matrix.
The following compatibility relations hold
\begin{equation}\label{eq: comp rela}
%\begin{split}
\tilde{g}(\cdot,\cdot)=\tilde{g}(\tilde{J}\cdot,\tilde{J}\cdot), \quad
\tilde{\n}\tilde{J}\cdot=\tilde J\tilde \n \cdot %\quad \text{for every } Z\in T_p\Sph^{2N+1}.
%\end{split}
\end{equation}
Then for every $j,l=1,\dots, N$ we have
\begin{align*}
g(\n_T U_j,U_l)&\overset{\eqref{eq: Killing}}{=}-g(T,\n_{U_l} U_j)=-\tilde{g}(T,\tilde\n_{U_l} U_j) =\\
=-\tilde{g}(\tilde{J}T,\tilde\n_{U_l} \tilde{J}U_j)&\overset{\eqref{eq: comp rela}}{=}\-\tilde{g}(\nu,\tilde\n_{U_l} V_j)=h(U_l,V_j)=0 \\
\intertext{and similarly}
g(\n_T U_j,V_l)&\overset{\eqref{eq: Killing}}{=}-g(T,\n_{V_l} U_j)\overset{\eqref{eq: comp rela}}{=}h(V_l,V_j)=\delta_{jl}\\
g(\n_T U_j,T)&\overset{\eqref{eq: Killing}}{=}-g(T,\n_{T} U_j)\overset{\eqref{eq: comp rela}}{=}h(T,V_j)=0.
\end{align*}
Also
\begin{equation*}
g(\n_T U_j,X_i)=0,\qquad g(\n_T U_j,Y_i)=0 \quad \text{for every } i=0,\dots,k.
\end{equation*}
Thus
\begin{equation*}
\n_{T}U_j=V_j  \quad\text{for every } j=0,\dots,N.
\end{equation*}
\noindent
Recalling that $V_j's$ are geodesic Killing vector fields, the same argument gives
\begin{equation*}
\n_{T}V_j=-U_j \quad \text{for every } j=0,\dots,N.
\end{equation*}
Moreover
\begin{align*}
g(\n_{ U_j}T,U_l)&\overset{\eqref{eq: comp rela}}{=} h(U_j,V_j)=0\\
g(\n_{ U_j}T,V_l)&\overset{\eqref{eq: comp rela}}{=} -h(U_j,U_l)=-\delta_{jl}\\
g(\n_{ U_j}T,T)&=g(\n_{ U_j}T,X_i)=g(\n_{ U_j}T,Y_i)=0 \qquad\text{ for i=0,\dots, k}.
\end{align*}
hence
$$\n_{U_j}T=-V_i.$$
\noindent
Since $U_j$'s are geodesic we have $\tilde{\n}_{U_j}U_j=-\nu$, from which we get
\begin{equation*}
\tilde{\n}_{U_j}V_j =\n_{ U_j }V_j=T.
\end{equation*}
Analogous computations give $\n_{V_j}T=U_j$ and $\n_{V_j}U_j=-T$.
\end{proof}

\noindent
In the sequel we will use the following formula to compute some covariant derivatives:
\begin{multline}\label{eq: connection coefficients formula}
g(\nabla _{X}Y,Z)={\tfrac {1}{2}}{\Big \{}X{\bigl (}g(Y,Z){\bigr )}+Y{\bigl (}g(Z,X){\bigr )}-Z{\bigl (}g(X,Y){\bigr )}+\\
+g{\bigl (}[X,Y],Z{\bigr )}-g{\bigl (}[Y,Z],X{\bigr )}-g{\bigl (}[X,Z],Y{\bigr )}{\Big \}},
\end{multline}
where $X,Y,Z\in TM$. So, first we compute the necessary commutators.

\begin{lemma} \label{lem: commutators}
For every $i,l=1,\dots,k$ and every $j=1,\dots,N$, we have
\begin{align*}
&[X_0,Y_0]=2Y_0+2T,             &\quad &[X_0,X_i]=X_i,    &\quad &[X_0,Y_i]=Y_i,	&\quad &[X_0,T]=0,\\
&[X_0,U_j]=0,                   &\quad &[X_0,V_j]=0,      &\quad &[Y_0,X_i]=0,		&\quad &[Y_0,Y_i]=0,\\
&[Y_0,T]=0,                     &\quad &[Y_0,U_j]=-2V_j,  &\quad &[Y_0,V_j]=2U_j,	&\quad &[X_i,X_l]=0,\\
&[X_i,Y_l]=\delta_{il}(2Y_0+2T),&\quad &[X_i,T]=0,        &\quad &[X_i,U_j]=0,		&\quad &[X_i,V_j]=0,\\
&[Y_i,Y_l]=0,                   &\quad &[Y_i,T]=0,        &\quad &[Y_i,U_j]=0,		&\quad &[Y_i,V_j]=0,\\
&[U_j,T]=-2V_j,                 &\quad &[V_j,T]=2U_j.\\
\end{align*}
\end{lemma}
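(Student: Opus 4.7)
The vector fields in $\mathcal{B}$ split naturally into three groups: the ``radial'' fields $X_0 = \partial_s$ and $Y_0 = 2e^{2s}\partial_t - T$; the Heisenberg fields $X_i = \sqrt{2}e^s X_i^{\thetaH{k}}$, $Y_i = \sqrt{2}e^s Y_i^{\thetaH{k}}$ for $i=1,\dots,k$, which differentiate only in the $\H^k$-coordinates $(t,x_i,y_i)$ with coefficients depending on $s$; and the ``spherical'' fields $T, U_j, V_j$ on the $\Sph^{2N+1}$-factor ($U_j, V_j$ taken locally as orthonormal geodesic Killing fields on $(\Sph^{2N+1},\thetaS{N})$). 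Any bracket between two fields whose differentiation variables and non-constant coefficients lie in disjoint sets vanishes. Since the factor $2e^{2s}$ in $Y_0$ depends only on $s$ and $T$ is supported on the sphere, this immediately yields the zero entries $[X_i,T]$, $[X_i,U_j]$, $[X_i,V_j]$, $[Y_i,T]$, $[Y_i,U_j]$, $[Y_i,V_j]$, $[X_0,T]$, $[X_0,U_j]$, $[X_0,V_j]$, $[Y_0,X_i]$, $[Y_0,Y_i]$, $[Y_0,T]$.

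For the remaining brackets I would proceed in three sub-cases. First, the ones containing $X_0 = \partial_s$ come from direct $s$-differentiation: $[X_0, Y_0] = 4e^{2s}\partial_t = 2Y_0 + 2T$ using $2e^{2s}\partial_t = Y_0+T$, and $[X_0, X_i] = (\partial_s \sqrt{2}e^s)X_i^{\thetaH{k}} = X_i$, similarly $[X_0, Y_i] = Y_i$. Second, within the Heisenberg slice one uses the standard left-invariant relations $[X_i^{\thetaH{k}}, X_l^{\thetaH{k}}] = 0 = [Y_i^{\thetaH{k}}, Y_l^{\thetaH{k}}]$ and $[X_i^{\thetaH{k}}, Y_l^{\thetaH{k}}] = 2\delta_{il}\partial_t$; pulling the common $e^s$ factor through gives $[X_i, X_l] = 0 = [Y_i, Y_l]$ and $[X_i, Y_l] = 4\delta_{il}e^{2s}\partial_t = \delta_{il}(2Y_0 + 2T)$. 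Third, for the mixed $Y_0$--sphere brackets the term $2e^{2s}\partial_t$ commutes with $U_j, V_j$, so $[Y_0, U_j] = -[T, U_j]$ and $[Y_0, V_j] = -[T, V_j]$.

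The only commutators not directly visible from coordinate expressions are the purely spherical ones $[U_j, T]$ and $[V_j, T]$, since $U_j, V_j$ are defined only abstractly as local geodesic Killing fields. I would derive them from Lemma~\ref{lem: connection coefficients 1} using the torsion-free property of the Levi-Civita connection: $[U_j, T] = \nabla_{U_j}T - \nabla_T U_j = -V_j - V_j = -2V_j$, and $[V_j, T] = \nabla_{V_j}T - \nabla_T V_j = U_j - (-U_j) = 2U_j$. Substituting these into the previous step also resolves $[Y_0, U_j] = -2V_j$ and $[Y_0, V_j] = 2U_j$. This is the only step that requires input beyond direct differentiation; the remaining entries in the table follow by antisymmetry $[X,Y] = -[Y,X]$.
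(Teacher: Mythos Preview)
Your proposal is correct and follows essentially the same approach as the paper's proof: the paper also derives $[U_j,T]$ and $[V_j,T]$ from Lemma~\ref{lem: connection coefficients 1} via the torsion-free identity, then obtains $[Y_0,U_j]$ and $[Y_0,V_j]$ from these, and states that all remaining brackets follow from the explicit coordinate expressions together with the product structure of $M$. Your write-up is simply a more detailed and explicitly organized version of that argument.
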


\begin{proof}
Using Lemma \ref{lem: connection coefficients 1}, for every $j=1,\dots, N$ we compute
\begin{align*}
[U_j,T]&=\n_{U_j}T-\n_T U_j=-2V_j\\
[V_j,T]&=\n_{V_j}T-\n_T V_j=2U_j
\end{align*}
from which we get
\begin{align*}
[Y_0,U_j]&=[-T,U_j]=-2V_j \\
[Y_0,V_j]&=[-T,V_j]=2U_j.
\end{align*}
All the other commutators are computed using the explicit expression of the vector fields involved and the fact that $M$ is a product manifold.
\end{proof}

\noindent
Using \eqref{eq: connection coefficients formula} and Lemma \ref{lem: commutators} we compute the following covariant derivatives:
\begin{lemma} \label{lem: connection coefficients 2}
For every $i,l=1,\dots,k$ and every $j=1,\dots,N$, we have
\begin{align*}
&\n_{X_0}X_0 =0,				&\quad &\n_{X_0}Y_0 =T,  					&\quad &\n_{X_0}X_i=0, 		&\quad &\n_{X_0}Y_i=0,\\
&\n_{X_0}T=-Y_0, 				&\quad &\n_{X_0}U_j=0, 						&\quad &\n_{X_0}V_j=0, 		&\quad &\n_{Y_0}X_0=-2Y_0-T,\\
&\n_{Y_0} Y_0=2X_0,				&\quad &\n_{Y_0} X_i=-Y_i, 					&\quad &\n_{Y_0} Y_i=X_i, 	&\quad &\n_{Y_0}T=X_0,\\
&\n_{Y_0}U_j =-2V_j, 			&\quad &\n_{Y_0}V_j =2U_j, 					&\quad &\n_{X_i}X_0=-X_i, 	&\quad &\n_{X_i} Y_0=-Y_i, \\
&\n_{X_i} X_l=\delta_{il}X_0, 	&\quad &\n_{X_i} Y_l=\delta_{il}(T+Y_0), 	&\quad &\n_{X_i}T=-Y_i, 	&\quad &\n_{X_i}U_j=0\\
&\n_{X_i}V_j =0, 				&\quad &\n_{Y_i}X_0=-Y_i, 					&\quad &\n_{Y_i} Y_0=X_i, 	&\quad &\n_{Y_i} X_l=-\delta_{il}(T+Y_0),\\
&\n_{Y_i} Y_l=\delta_{il}X_0, 	&\quad &\n_{Y_i}T =X_i, 					&\quad &\n_{Y_i}U_j =0, 	&\quad &\n_{Y_i}V_j=0,\\
&\n_{T}X_0=-Y_0, 				&\quad &\n_{T} Y_0=X_0, 					&\quad &\n_{T} X_i=-Y_i, 	&\quad &\n_{T} Y_i=X_i,\\
&\n_{U_j}X_0=0, 				&\quad &\n_{U_j} Y_0=0, 					&\quad &\n_{U_j} X_i=0, 	&\quad &\n_{U_j} Y_i=0,\\
&\n_{V_j}X_0=0, 				&\quad &\n_{V_j} Y_0=0, 					&\quad &\n_{V_j} X_i=0, 	&\quad &\n_{V_j} Y_i=0.
\end{align*}
\begin{proof}
Since $\mathcal{B}$ is an orthonormal basis, formula \eqref{eq: connection coefficients formula} reduces to
\begin{equation*}
g(\nabla _{X}Y,Z)=\frac {1}{2}\left\{ g{\bigl (}[X,Y],Z{\bigr )}-g{\bigl (}[Y,Z],X{\bigr )}-g{\bigl (}[X,Z],Y{\bigr )}\right\},	\quad \text{for every } X,Y,Z\in\mathcal{B}.
\end{equation*}
Here we compute $\n_{X_0}X_0$ as an example, the other covariant derivatives are computed similarly. Recalling Lemma \ref{lem: commutators}, for every $i=1,\dots,k$ and $j=1,\dots,N$ we have
\begin{align*}
g\left(\n_{X_0}X_0,X_0 \right)&=0,\\
g\left(\n_{X_0}X_0, Y_0\right)&=-g\left([X_0, Y_0],X_0\right)=-g\left(2Y_0+2T,X_0\right)=0,\\
g\left(\n_{X_0}X_0, X_i\right)&=-g\left([X_0, X_i],X_0\right)=0,\\
g\left(\n_{X_0}X_0, Y_i\right)&=-g\left([X_0, Y_i],X_0\right)=0,\\
g\left(\n_{X_0}X_0, T\right)&=-g\left([X_0, T],X_0\right)=0,\\
g\left(\n_{X_0}X_0, U_j\right)&=-g\left([X_0, U_j],X_0\right)=0,\\
g\left(\n_{X_0}X_0, V_j\right)&=-g\left([X_0, V_j],X_0\right)=0.
\end{align*}
Thus $\n_{X_0}X_0=0$.
\end{proof}
\end{lemma}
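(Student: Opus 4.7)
The plan is to prove the lemma by systematically applying the Koszul formula and reading off the components of each $\nabla_X Y$ in the orthonormal basis $\mathcal{B}$. Since $\mathcal{B}$ is orthonormal, the $X\bigl(g(Y,Z)\bigr)$-type terms in the standard Koszul identity vanish, leaving
\begin{equation*}
g(\nabla_X Y, Z) = \tfrac{1}{2}\bigl\{g([X,Y],Z) - g([Y,Z],X) - g([X,Z],Y)\bigr\},
\qquad X,Y,Z\in\mathcal{B}.
\end{equation*}
Thus $\nabla_X Y = \sum_{Z\in\mathcal{B}} g(\nabla_X Y, Z)\, Z$, and each right-hand side is a pure commutator evaluation using Lemma \ref{lem: commutators}.

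First I would exploit the product structure $M = \H^k \times \R \times \Sph^{2N+1}$ to eliminate most entries at once. Every commutator between an $\H^k$-field $X_i, Y_i$ and a sphere field $U_j, V_j$ vanishes, and $X_0 = \partial_s$ commutes with every $X_i, Y_i, T, U_j, V_j$, so whole rows and columns of the table are zero without further computation. Moreover, the sphere sub-block $\nabla_T U_j, \nabla_T V_j, \nabla_{U_j} T, \nabla_{U_j} V_j, \nabla_{V_j} T, \nabla_{V_j} U_j$ has already been obtained in Lemma \ref{lem: connection coefficients 1} and can simply be quoted.

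The computations that genuinely need to be carried out involve $X_0, Y_0$ and the $X_i, Y_i$'s, because $Y_0 = 2e^{2s}\partial_t - T$ mixes the Heisenberg and spherical directions. The critical commutators for these cases are $[X_0, Y_0] = 2Y_0 + 2T$, $[X_i, Y_l] = \delta_{il}(2Y_0 + 2T)$, $[X_0, X_i] = X_i$, $[X_0, Y_i] = Y_i$, together with $[Y_0, U_j] = -2V_j$ and $[Y_0, V_j] = 2U_j$. Applying the simplified Koszul formula to each such triple is routine. As a representative instance, for $\nabla_{Y_0} Y_0$ the only non-vanishing component is
\begin{equation*}
g(\nabla_{Y_0} Y_0, X_0) = -g([Y_0, X_0], Y_0) = g(2Y_0 + 2T, Y_0) = 2,
\end{equation*}
which recovers $\nabla_{Y_0} Y_0 = 2X_0$. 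The remaining mixed derivatives ($\nabla_{X_0} Y_0 = T$, $\nabla_{Y_0} X_0 = -2Y_0 - T$, $\nabla_{X_i} Y_l = \delta_{il}(T + Y_0)$, and so on) follow identically.

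The main obstacle is not conceptual but purely organizational: with $|\mathcal{B}| = 2k + 2N + 3$ one has hundreds of entries to process, and subtle sign issues appear whenever $Y_0$ occurs, because the $-T$ summand in $Y_0$ couples to the spherical brackets $[U_j, T] = -2V_j$ and $[V_j, T] = 2U_j$ with the opposite sign to what a naive product-rule intuition would suggest. Once the mixed entries are verified (and consistency-checked against Lemma \ref{lem: connection coefficients 1}), every remaining row of the table reduces either to a vanishing commutator forced by the product structure or to a one-line evaluation.
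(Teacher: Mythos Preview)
Your proposal is correct and follows essentially the same approach as the paper: both reduce the Koszul formula to the three-bracket form using orthonormality of $\mathcal{B}$, then read off components from the commutator table in Lemma~\ref{lem: commutators}. The paper carries out $\nabla_{X_0}X_0$ as its sample computation while you chose $\nabla_{Y_0}Y_0$, and your additional organizational remarks (exploiting the product structure to kill mixed $\H^k$--sphere blocks, and quoting the purely spherical block from Lemma~\ref{lem: connection coefficients 1}) are helpful but do not change the method.
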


\noindent
Now we are ready to conclude the proof of Theorem \ref{thm: explicit singular contact form}
\begin{proof}[Proof of Theorem \ref{thm: explicit singular contact form} ]
It remains to compute $\S_{\theta_{k,N}}$. For every $W\in\mathcal{B}$ we have
\begin{align}\label{eq: Ricci}
Ric_{g}(W,W)=\sum_{Z\in\mathcal{B}} g\left(\n_{Z}\n_{W}W-\n_{W}\n_{Z}W-\n_{[Z,W]}W,Z\right).
\end{align}
We explicitly compute  $Ric_{g}(X_i,X_i)$ for every $i=1,\dots,k.$  By Lemma \ref{lem: connection coefficients 1} and Lemma \ref{lem: connection coefficients 2} we have
\begin{align*}
Ric_{g}(X_i,X_i)&=\sum_{Z\in\mathcal{B}} g\left(\n_{Z}\n_{X_i}X_i-\n_{X_i}\n_{Z}X_i-\n_{[Z,X_i]}X_i,Z\right)\\
&=\sum_{Z\in\mathcal{B}} g\left(\n_{Z}X_0-\n_{X_i}\n_{Z}X_i-\n_{[Z,X_i]}X_i,Z\right)\\
%&=g\left(\n_{X_0}X_0-\n_{X_i}\n_{X_0}X_i-\n_{[X_0,X_i]}X_i,X_0\right)+\\
%&\quad + g\left(\n_{Y_0}X_0-\n_{X_i}\n_{Y_0}X_i-\n_{[Y_0,X_i]}X_i,Y_0\right)+\\
%&\quad + \sum_{l=1}^k g\left(\n_{X_l}X_0-\n_{X_i}\n_{X_l}X_i-\n_{[X_l,X_i]}X_i,X_l\right)+\\
%&\quad + \sum_{l=1}^k g\left(\n_{Y_l}X_0-\n_{X_i}\n_{Y_l}X_i-\n_{[Y_l,X_i]}X_i,Y_l\right)+\\
%&\quad + g\left(\n_{T}X_0-\n_{X_i}\n_{T}X_i-\n_{[T,X_i]}X_i,T\right)+\\
%&\quad + \sum_{l=1}^N g\left(\n_{U_l}X_0-\n_{X_i}\n_{U_l}X_i-\n_{[U_l,X_i]}X_i,U_l\right)+\\
%&\quad + \sum_{l=1}^N g\left(\n_{V_l}X_0-\n_{X_i}\n_{V_l}X_i-\n_{[V_l,X_i]}X_i,V_l\right)\\
&=g\left(-\n_{X_i}X_i,X_0\right)+ g\left(\n_{Y_0}X_0+\n_{X_i}Y_i,Y_0\right)+\sum_{l=1}^k g\left(\n_{X_l}X_0-\delta_{li}\n_{X_i}X_0,X_l\right)+\\
&\qquad + \sum_{l=1}^k g\left(\n_{Y_l}X_0-\delta_{li}\n_{X_i}(T+Y_0)+\n_{\delta_{li}(2Y_0+2T)}X_i,Y_l\right)+\\
&\qquad + g\left(\n_{T}X_0-\n_{X_i}Y_i,T\right)+ \sum_{l=1}^N g\left(\n_{U_l}X_0,U_l\right)+\sum_{l=1}^N g\left(\n_{V_l}X_0,V_l\right)\\
&=g\left(-X_0,X_0\right)+ g\left(-2Y_0-T+T+Y_0,Y_0\right)+ \sum_{l=1}^k g\left(-X_l+\delta_{li}X_i,X_l\right)+\\
&\qquad + \sum_{l=1}^k g\left(-Y_l-6\delta_{li}Y_i,Y_l\right)+ g\left(-Y_0+T+Y_0,T\right)+0+0\\
&=-1-1+(-k+1)+(-k-6)+1+0+0\\
&=-6-2k.
\end{align*}
Similarly
\begin{align*}
&Ric_{g}(Y_i,Y_i)=-1-1+(-k-6)+(-k+1)+1+0+0=-6-2k\\
&Ric_{g}(X_0,X_0)=0-7-k-k+1+0+0=-6-2k\\
&Ric_{g}(Y_0,Y_0)=-7+0-k-k+1+0+0=-6-2k
\end{align*}
here we have considered \eqref{eq: Ricci} with $W\in\mathcal{B}$, $Z$ running in the ordered basis $\mathcal{B}$ and we have written, in the order, each of the terms in the sum in the right hand side of \eqref{eq: Ricci}.
Moreover since $M=\H^{k}\times \R\times \Sph^{2N+1}$ and $\{T,U_1,V_1,\dots,U_N,V_N\}$ is an orthonormal basis for $T\Sph^{2N+1}$ with respect to the metric $g_{\thetaS{N}}$, we have
\begin{align*}
Ric_{g}(U_j,U_j)&= Ric_{g_{\thetaS{N}}}(U_j,U_j)+\sum_{\substack{Z=X_0,Y_0,X_i,Y_i\\ i=1,\dots,k}} g\left(\n_{Z}\n_{U_j}U_j-\n_{U_j}\n_{Z}U_j-\n_{[Z,U_j]}U_j,Z\right)\\
&=Ric_{g_{\thetaS{N}}}(U_j,U_j)=2N
\intertext{and }
Ric_{g}(V_j,V_j)&=2N.
\end{align*}
Hence, recalling \eqref{eq: Webster scalar curvature} and the definition $N=n-k-1$, we have
\begin{align*}
\S_{\theta_{k,N}}&=(2k+2)(-6-2k)+(N+N)2N+4n\\
&=4\Bigl((N-k)(N+k)+2(N-k)-(N+k)\Bigr)\\
&=4(N+k+2)(N-k-1)
\end{align*}
that is
\begin{equation*}
\S_{\theta_{k,N}}=4(n+1)(n-2k-2).
\end{equation*}
In particular, we notice that $\S_{\theta_{k,N}}$ is  positive for $k<\frac{n-2}{2}$.
\end{proof}

\section{Singularity along a circle}

\noindent
Here we will use the explicit contact structure that we found in order to obtain some existence result as applications.\\
We will need the explicit expression of $\Lap_{\theta_{k,N}}$, which is
\begin{equation*}
\Lap_{\theta_{k,N}}=T^2+\Lap_{\thetaS{N}}+2e^{2s}\Lap_{\thetaH{k}}+4e^{4s}\frac{\partial^2}{\partial t^2}-4e^{2s}T\frac{\partial}{\partial t}+\frac{\partial^2}{\partial s^2}-2(k+1)\frac{\partial}{\partial s}.
\end{equation*}
Indeed we have
\begin{align*}
X_0^2&=\frac{\partial^2}{\partial s^2}\\
Y_0^2&=T^2+4e^{4s}\frac{\partial^2}{\partial t^2}-4e^{2s}T\frac{\partial}{\partial t}\\
X_i^2&=2e^{2s}\left(X_i^{\thetaH{k}}\right)^2\\
Y_i^2&=2e^{2s}\left(Y_i^{\thetaH{k}}\right)^2 \quad \text{for } i=i,\dots,k,
\end{align*}
so
$$\sum_{i=1}^k (X_i^2+Y_i^2)=2e^{2s}\Lap_{\thetaH{k}}$$
and  by Lemma \ref{lem: connection coefficients 2}
\begin{equation*}
\n_{X_0}X_0=0,\quad \n_{Y_0}Y_0=2\frac{\partial}{\partial s},\quad \n_{X_i}X_i=\frac{\partial}{\partial s},\quad \n_{Y_i}Y_i=\frac{\partial}{\partial s} \quad \n_{T}T=0
\end{equation*}
for $i=1,\dots,k$. Hence
\begin{align*}
\Lap_{\theta_{k,N}}&=\Lap_{g_{\theta_{k,N}}}-T^2\\
&= X_0^2-\n_{X_0}X_0+Y_0^2-\n_{Y_0}Y_0+\sum_{i=1}^k (X_i^2+Y_i^2)-\sum_{i=1}^k (\n_{X_i}X_i+\n_{Y_i}Y_i)+\\
&\qquad \qquad +\sum_{j=1}^{N+1} (U_j^2+V_j^2)-\sum_{j=1}^{N+1} (\n_{U_j}U_j+\n_{V_j}V_j)-\n_TT\\
&= \frac{\partial^2}{\partial s^2}+T^2+4e^{4s}\frac{\partial^2}{\partial t^2}-4e^{2s}T\frac{\partial}{\partial t}-2\frac{\partial}{\partial s}+2e^{2s}\Lap_{\thetaH{k}}-2k\frac{\partial}{\partial s}+\Lap_{\thetaS{N}}.
\end{align*}

\noindent
Next we will need a kind of expansion of the Webster scalar curvature. So let us consider \eqref{eq: def varphi} with the additional change of variable $r=\sqrt{2}e^s$. We denote it by $\bar \varphi$. In these coordinates the standard contact form of $\H^n$ is
\begin{equation}\label{eq: singular radial}
\bar{\theta}=(\bar \varphi^{-1})^* \thetaH{n}=\thetaH{k}+r^2\thetaS{N}
\end{equation}
and we will use the notation $\bar\phi=\d\bar{\varphi}\circ\phi^{\thetaH{n}}\circ \d\bar{\varphi}^{-1}$. %and we let $\bar g$ be the metric defined by \eqref{eq: metric associated to theta}.
We define $(\hat\theta,\hat\phi)$ as
\begin{equation}\label{eq: captheta}
\hat\theta=\bar\theta+O(r^2)\beta,\quad \hat\phi=\bar\phi+O(r)\psi
\end{equation}
with $\beta$ a one form and $\psi$ a $(1,1)$-tensor, both with smooth coefficients. We have the following

\begin{proposition}
Let $(\hat\theta,\hat\phi)$ be as in \eqref{eq: captheta} and consider $\tilde{\theta}=r^{-2}\hat\theta$. Then the Webster scalar curvature of $(M,\tilde{\theta},\hat\phi)$ is
$$\mathcal{S}_{\tilde{\theta}}=\mathcal{S}_{\theta_{k,N}}+O(r)$$
\end{proposition}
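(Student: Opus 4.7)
The approach is to compute $\S_{\tilde\theta}$ by repeating the Ricci-tensor calculation carried out in the proof of Theorem \ref{thm: explicit singular contact form}, but with every quantity (Lie bracket, Christoffel symbol, Ricci trace) written for the perturbed structure $(\tilde\theta,\hat\phi)$, and to check that each such quantity agrees with its unperturbed value up to an $O(r)$ remainder. The crucial point is the interplay between the rescaling factor $r^{-2}$ and the explicit $r$-weights carried by the basis $\mathcal{B}$ of Section 3: in the coordinates $(t,x,y,r,\xi,\eta)$ one has $X_0 = r\,\partial_r$, $Y_0 = r^2\,\partial_t - T$, and $X_i,Y_i$ proportional to $r$, so that the 1-form $r^{-2}(\hat\theta - \bar\theta)$ --- whose coefficients in the coordinate basis are $O(1)$ by hypothesis --- evaluates to $O(r)$ on every element of $\mathcal{B}$ once the Reeb direction is adjusted by the contact condition, and similarly the correction $\hat\phi - \phi^{\theta_{k,N}}$ acts as $O(r)$ on the $r$-weighted frame.

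First I would show that the matrix of $g_{\tilde\theta,\hat\phi}$ in $\mathcal{B}$ equals the identity plus an $O(r)$ symmetric perturbation. This follows from the definition \eqref{eq: metric associated to theta}, the expansion $d\tilde\theta = d\theta_{k,N} + d\bigl(r^{-2}(\hat\theta-\bar\theta)\bigr)$, and the $O(r)$ bounds on $\hat\phi - \phi^{\theta_{k,N}}$ applied to $\mathcal{B}$. A Gram--Schmidt orthonormalization against $g_{\tilde\theta,\hat\phi}$ then produces a $g_{\tilde\theta,\hat\phi}$-orthonormal frame $\hat{\mathcal{B}} = \mathcal{B} + O(r)$, with the Reeb satisfying $T_{\tilde\theta} = T + O(r)$ (the correction being fixed by $\tilde\theta(T_{\tilde\theta})=1$ and $d\tilde\theta(T_{\tilde\theta},\cdot)=0$).

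Next, since $\hat{\mathcal{B}} = \mathcal{B} + O(r)$ and the Lie bracket is a bilinear differential operation, the brackets among fields of $\hat{\mathcal{B}}$ agree with those of Lemma \ref{lem: commutators} modulo $O(r)$. The Koszul identity \eqref{eq: connection coefficients formula} then yields Christoffel coefficients matching Lemma \ref{lem: connection coefficients 2} up to $O(r)$. Substituting the perturbed Christoffels into the Ricci expression \eqref{eq: Ricci} for each $\hat W \in \hat{\mathcal{B}}$ and summing according to \eqref{eq: Webster scalar curvature}, exactly as at the end of the proof of Theorem \ref{thm: explicit singular contact form}, gives $\S_{\tilde\theta} = \S_{\theta_{k,N}} + O(r)$.

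The main obstacle is that several Christoffel symbols in Lemma \ref{lem: connection coefficients 2} are $O(1)$ and nonzero (for example $\n_{Y_0}X_0 = -2Y_0 - T$), while the Ricci tensor is quadratic in the connection; so one must verify that an $O(r)$ perturbation of a Christoffel, multiplied by a bounded leading factor, remains $O(r)$ throughout the summation, with no hidden cross-terms upgrading the error to $O(1)$. This amounts to a careful bookkeeping that mirrors the computation of Lemma \ref{lem: connection coefficients 2}, exploiting the finiteness of the structure constants of $\mathcal{B}$ recorded in Lemma \ref{lem: commutators} and the fact that each of the perturbations of frame, bracket and Christoffel is controlled termwise in powers of $r$.
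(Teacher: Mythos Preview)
Your approach is genuinely different from the paper's. The paper does \emph{not} perturb the curvature computation of Theorem~\ref{thm: explicit singular contact form} in the frame $\mathcal{B}$. Instead it works with the unrescaled form $\hat\theta$, which is a perturbation of the \emph{flat} Heisenberg form $\bar\theta$, and then passes to $\tilde\theta=r^{-2}\hat\theta$ via the conformal-change identity \eqref{eq: id1} with the explicit conformal factor $u=r^{-n}$. Concretely, the paper (i) shows $\hat g=\bar g+O(r)$ in a $\bar g$-orthonormal frame and deduces the crude bound $\S_{\hat\theta}=O(1/r)$; (ii) computes $\Delta_{\hat\theta}(r^{-n})=-nr^{-(n+2)}\bigl(n-2k-2+O(r)\bigr)$ directly; (iii) plugs both into \eqref{eq: id1}. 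The point is that the exact constant $4(n+1)(n-2k-2)=\S_{\theta_{k,N}}$ emerges from the \emph{scalar} computation of $\Delta_{\hat\theta}(r^{-n})$, not from a full Ricci calculation, and only a rough estimate on $\S_{\hat\theta}$ is needed because its contribution enters at order $r^{-(n+1)}$ rather than $r^{-(n+2)}$.

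Your route can be made to work, but the step you label ``careful bookkeeping'' is where the actual content lies and is not as innocent as you suggest. Passing from ``metric $=I+O(r)$ in $\mathcal{B}$'' to ``Christoffels $=\text{(Lemma \ref{lem: connection coefficients 2})}+O(r)$'' via Koszul requires that \emph{frame derivatives} of the $O(r)$ metric perturbation are again $O(r)$; and passing from Christoffels to Ricci via \eqref{eq: Ricci} requires this once more at second order. This is not automatic for an arbitrary $O(r)$ function. What makes it true here is the specific structure of the perturbation: since $\beta,\psi$ are smooth in the ambient Cartesian coordinates of $\H^n$, and \emph{every} vector of $\mathcal{B}$ (including $X_0$, $Y_0$, and the sphere fields $T,U_j,V_j$) has Cartesian components of size $O(r)$, one gets $V(a)=O(r)$ whenever $a=O(r)$ is built from these data. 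You should state and use this observation explicitly; without it the passage from metric to Christoffel to Ricci is a genuine gap. The paper's use of \eqref{eq: id1} sidesteps this issue entirely by replacing the second-order curvature perturbation with a single Laplacian applied to the known function $r^{-n}$.
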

\begin{proof}
The idea is to compute the Webster scalar curvature $\mathcal{S}_{\hat{\theta}}$ and to write the operator $\Lap_{\hat{\theta}}$ in order to use \eqref{eq: id1} with $u=r^{-n}$ to obtain $\mathcal{S}_{\tilde{\theta}}$. It is convenient to consider $\mathcal{B}$ an orthonormal basis of $TM$ with respect to $\bar{g}$ (i.e the metric defined by $\bar{\theta}$ and $\bar{\phi}$ as in \eqref{eq: metric associated to theta}):
\begin{equation}
\begin{split}
\bar T&=\frac{\partial}{\partial t},\qquad \bar{X_0}=\frac{\partial}{\partial r},\qquad \bar{Y_0}=r\frac{\partial}{\partial t}-\frac{1}{r}T^{\thetaS{N}}\\
\quad \bar{X_i}&=X_i^{\thetaH{k}},\qquad \bar{Y_i}=Y_i^{\thetaH{k}},\quad i=1,\dots,k\\
\bar{U}_j&=\frac{1}{r}U_j, \qquad \bar{V}_j=\frac{1}{r}V_j, \qquad j=1,\dots,N
\end{split}
\end{equation}
with $U_j,\;V_j$ defined as in \eqref{eq: def UV}.
We denote by $\hat{g}$ the metric defined by $\hat{\theta}$ and $\hat{\phi}$ as in \eqref{eq: metric associated to theta}. By definition we have
\begin{align*}
\hat{g}(V,W)&=-\frac{1}{2}d\hat{\theta}(V,\hat{\phi}W)\\
%&=-\frac{1}{2}d\hat{\theta}\left(V,\bar{phi}W+O(r)\psi W\right)\\
&=-\frac{1}{2}\left[ d\bar{\theta}+O(r)dr\wedge\beta+O(r^2)\beta\right](V,\;\hat{\phi}W+O(r)\psi W)\\
&=\bar{g}(V,W)+d\bar{\theta}(V,O(r)\psi W)+O(r)\left[dr\wedge\beta+O(r)d\beta\right](V,\;\hat{\phi}W+O(r)\psi W).
\end{align*}
Since $\beta(V)=O(1)$ and $d\beta(V,W)=O\left(\frac{1}{r}\right)$ for any $V,W\in \mathcal{B}$, we get
\begin{equation*}
\hat{g}(V,W)=\bar{g}(V,W)+O(r).
\end{equation*}
From this last relation it is possible to compute
\begin{align*}
Ric_{\hat{g}}(V,W)&=Ric_{\bar{g}}(V,W)+O\left(\frac{1}{r}\right),\quad \text{for any } V,W\in \mathcal{B}\\
Ric_{\hat{g}}(\hat T,\hat T)&=Ric_{\bar{g}}\left(\bar T,\bar T\right)+O\left(\frac{1}{r}\right)
\end{align*}
where $\hat{T}$ and  $\bar{T}$ are the Reeb vector fields associated to $\hat\theta$ and $\bar\theta$ respectively, and the scalar curvature
$$R_{\hat{g}}=R_{\bar{g}}+O\left(\frac{1}{r}\right).$$
Then, the Webster scalar curvature $\mathcal{S}_{\hat{\theta}}$ is (see \cite[equation (8.2)]{Tan89})

\begin{align*}
\mathcal{S}_{\hat{\theta}}&=R_{\hat{g}}-Ric_{\hat{g}}(\hat T,\hat T)+4n\\
&=R_{\bar{g}}-Ric_{\bar{g}}\left(\bar T,\bar T\right) +4n + O\left(\frac{1}{r}\right)\\
&= O\left(\frac{1}{r}\right).
\end{align*}
The last equality follows from the fact that the standard Webster scalar curvature of $\H^n$ is $\mathcal{S}_{\thetaH{n}}=\mathcal{S}_{\bar{\theta}}=0$. Now we want to write $\Lap_{\hat{\theta}}u$ where $u=u(r)$ is a function that depends only on the coordinate $r$. We recall that
$$\Lap_{\hat{\theta}}u=\Lap_{\hat{g}}u-\hat{T}^2u$$
where $\Lap_{\hat{g}}$ is the metric Laplacian. In particular for $u=u(r)$ we have
\begin{align*}
\Lap_{\hat{g}}u&=\Lap_{\bar g}u+O(1)\frac{\partial u}{\partial r}+O(r)\frac{\partial^2 u}{\partial^2 r}\\
&=\frac{\partial^2 u}{\partial^2 r}+\frac{2n-2k-1}{r}\frac{\partial u}{\partial r}+O(r)\frac{\partial^2 u}{\partial^2 r}
\end{align*}
and since $\hat{T}=\frac{\partial}{\partial t}+O(r)X$ for a suitable vector field $X$, it holds
$$\hat{T}^2u=O(r^2)\frac{\partial^2u}{\partial^2 r}+O(r)\frac{\partial u}{\partial r}.$$
Hence for $u=r^{-n}$ we find
\begin{align*}
\Lap_{\hat{\theta}}(r^{-n})&=n(n+1)r^{-(n+2)}-n(2n-2k-1)r^{-(n+2)}+O\left(r^{-(n+1)}\right)\\
&=-nr^{-(n+2)}\left(n-2k-2+O(r)\right).
\end{align*}
Finally by using \eqref{eq: id1} with $u=r^{-n}$ we get
\begin{align*}
\S_{\tilde{\theta}}&=\frac{4(n+1)}{n} u^{-\frac{n+2}{n}}\left(-\Lap_{\hat{\theta}}u+\frac{n}{4(n+1)}\S_{\hat{\theta}}u\right)\\
&=\frac{4(n+1)}{n} r^{n+2}\left(-\Lap_{\hat{\theta}}(r^{-n})+O\left(r^{-(n+1)}\right)\right)\\
&=4(n+1)(n-2k-2)+O(r).
\end{align*}
\end{proof}

\subsection{Existence by perturbation}

\noindent
In this section, we will follow closely the perturbation approach developed in \cite{MS}. First let us set $L_{\theta}=\Delta_{\theta}-\frac{n}{4(n+1)}\S_{\theta}$. We consider a smooth embedding $\tau:\mathbb{S}^{1}\to \mathbb{S}^{2n+1}$ close to the identity and we want to find contact structures on $\mathbb{S}^{2n+1}\setminus \tau(\mathbb{S}^{1})$ having constant Webster curvature. Namely, we want to solve on $\mathbb{S}^{2n+1}\setminus \tau(\mathbb{S}^{1})$, the problem
$$L_{\thetaS{n}}v+\frac{n}{4(n+1)}\S_{\theta_{k,N}}v^{p-1}=0.$$
This is equivalent to solve the problem
$$L_{\theta(\tau)}v+\frac{n}{4(n+1)}\S_{\theta_{k,N}}v^{p-1}=0,$$
where $\theta(\tau)=u^{\frac{2}{n}}\tau^{*}\thetaS{n}$ and $u$ is the function giving the conformal change from $\thetaS{n}$ to $\theta_{k,N}$. Since we plan to perturb the equation with respect to the diffeomorphism $\tau$ and around the constant solution $1$, we can write the functional
$$K(\tau,w)=L_{\theta(\tau)}(1+w)+\frac{n}{4(n+1)}\S_{\theta_{k,N}}(1+w)^{p-1}.$$
We want then to solve $K(\tau,w)=0$ via the implicit function theorem, after perturbation around $(id,0)$. So we start by linearizing with respect to $w$:
$$\partial_{w}K(\tau,w)_{|(id,0)}=\Delta_{\theta_{k,N}}+2(n-2k-2).$$
We will consider the operator $\Lap_{\theta_{0,n-1}}$ acting on functions invariant under $T$. Then the operator $L$ takes form
$$L=\Delta_{\mathbb{S}^{2n-1}}+4e^{4s}\partial_{t}^{2}+\partial_{s}^{2}-2\partial_{s}.$$
If one now uses the change of variable $r=e^{2s}$, one gets
$$L=\Delta_{\mathbb{S}^{2n-1}}+4r^{2}\partial_{t}^{2}+4r^{2}\partial_{r}^{2}=\Delta_{\mathbb{S}^{2n-1}}+4\Delta_{\mathcal{H}^{2}}$$
where $\mathcal{H}^{2}=H\R^2$ is the standard hyperbolic space of dimension 2.
In the case $k=0$ the linearized equation becomes then,
$$L_{1}=\Delta_{\mathbb{S}^{2n-1}}+4\Delta_{\mathcal{H}^{2}}+2(n-2)$$
So we first investigate its kernel. For this purpose, we move to the unit disk model of the hyperbolic space with coordinates $x=(\sigma,\vartheta,y)$ where $\sigma\in [0,1]$, $\vartheta\in \mathbb{S}^{1}$ and $y\in  \mathbb{S}^{2n-1}$. We introduce then the family of spaces $C^{\nu,\alpha,k}( \mathbb{S}^{2n-1}\times \mathcal{H}^{2})$ that are adapted to the study of singular problems (see \cite{M,MS,MP,MP2}), by
$$C^{k,\alpha,\nu}( \mathbb{S}^{2n-1}\times \mathcal{H}^{2}):=\{u\in C^{k,\alpha}_{loc}( \mathbb{S}^{2n-1}\times \mathcal{H}^{2});\|u\|_{k,\alpha,\nu}<\infty\}$$
where $$\|u\|_{k,\alpha,\nu}=\sup_{x_{1},x_{2}\in  \mathbb{S}^{2n-1}\times \mathcal{H}^{2}}(\sigma_{1}+\sigma_{2})^{-\nu}\Big(\sum_{j=1}^{k}(\sigma_{1}+\sigma_{2})^{j}|\nabla^{j}u|+(\sigma_{1}+\sigma_{2})^{k+\alpha}[\nabla^{k}]_{\alpha}\Big).$$
In these coordinates, we can express the operator $L_{1}$ as follows:
$$
L_{1}=\Big[(1-\sigma^{2})^{2}\partial_{\sigma}^{2}+\frac{(1-\sigma^{2})^{2}}{\sigma}\partial_{\sigma}+\frac{(1-\sigma^{2})^{2}}{\sigma^{2}}\Delta_{\mathbb{S}^{1}}\Big]+\Delta_{ \mathbb{S}^{2n-1}}+2(n-2),
$$
where $\sigma\in (0,1)$. We look for solutions of the form $u=\sum_{i,j}a_{i,j}(\sigma)\phi_{i}\psi_{j}$ where the $\psi_{j}$ are $T$-invariant eigenfunctions of $\Delta_{ \mathbb{S}^{2n-1}}$ with eigenvalue $\lambda_{j}$ and the $\phi_{i}$ are the eigenfunctions of $\Delta_{ \mathbb{S}^{1}}$ with eigenvalue $\mu_{i}$ (see \cite{MS}, formula (2.13) with the squared eigenvalues). This yields the family of equations
$$A_{i,j}a_{i,j}=0$$
where $$A_{i,j}=(1-\sigma^{2})^{2}\Big [ \partial_{\sigma}^{2}+\frac{1}{\sigma}\partial_{\sigma}-\frac{\mu_{i}}{\sigma^{2}}\Big]-\lambda_{j}+2(n-2)$$
This is a Bessel type equation and the singularity at zero and $1$ is regular. Since we are looking for bounded solutions, there is only a unique regular solution to this equation corresponding to the indicial root $\gamma= i\in \mathbb{N}$, that is a function rotationally invariant. So, we move now to the singularity at 1. We set $\rho=1-\sigma^{2}$, then the operator $A_{i,j}$ becomes
$$A_{ij}=4\rho^{2}\Big[(1-\rho)\partial_{\rho}^{2}-\partial_{\rho}\Big]-\frac{\rho^{2}}{1-\rho}\mu_{i}-\lambda_{j}+2(n-2)$$
In this case, the indicial roots take the form
$$\gamma^{\pm}_{j}=\frac{1}{2}\pm \frac{1}{2}\sqrt{1+\lambda_{j}-2(n-2)}.$$
Notice that $\gamma^{-}$ is positive if and only if $\lambda_{j}=0$. Hence, we set $\nu_{0}=\frac{1}{2}$ and the function space that we will take is $C^{2,\alpha,\nu}( \mathbb{S}^{2n-1}\times \mathcal{H}^{2})$ where $\nu<\frac{1}{2}$. The kernel is then
$$\mathcal{K}(\alpha,\nu)=\{u\in C^{2,\alpha,\nu}; Lu=0\}$$
We recall now a result of Mazzeo-Smale \cite[Theorem 4.54]{MS}
\begin{lemma}[\cite{MS}]\label{lemsm}
For $\nu<\frac{1}{2}$, the operator $L_{1}:C^{2,\alpha,\nu}\to C^{0,\alpha,\nu}$ is onto.
\end{lemma}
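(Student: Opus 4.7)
The plan is to follow the Mazzeo--Smale strategy for edge-type operators on the asymptotically hyperbolic product $\mathbb{S}^{2n-1}\times \mathcal{H}^{2}$ and to build a right inverse mode-by-mode from the eigenfunction decomposition set up above. First, given $f\in C^{0,\alpha,\nu}$, I expand $f=\sum_{i,j}f_{ij}(\sigma)\phi_i\psi_j$, where $\phi_i$ and $\psi_j$ are the eigenfunctions on $\mathbb{S}^{1}$ and on $\mathbb{S}^{2n-1}$ (with the $T$-invariance constraint) with eigenvalues $\mu_i$ and $\lambda_j$ respectively. The equation $L_1 u=f$ then decouples into the family of Bessel-type ODEs $A_{ij}a_{ij}=f_{ij}$ on $(0,1)$, with the appropriate regularity imposed at both endpoints.

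Next, for each mode I would construct a particular solution by variation of parameters using the two homogeneous branches already identified in the excerpt: at $\sigma=0$ only the regular indicial root $\gamma=i\in\mathbb{N}$ yields a bounded solution, selecting one branch automatically; at $\sigma=1$, with $\rho=1-\sigma^{2}$, the indicial roots are $\gamma^{\pm}_{j}=\tfrac{1}{2}\pm\tfrac{1}{2}\sqrt{1+\lambda_{j}-2(n-2)}$. The critical weight $\nu_0=\tfrac12$ is precisely the common real part of this root pair in the borderline mode $\lambda_j=0$, so the hypothesis $\nu<\tfrac12$ guarantees that no indicial root at the boundary sits on the weight line. Under this condition, the standard edge/$b$-calculus machinery produces a unique Green's function for each mode with asymptotics matching the prescribed weight, giving $a_{ij}\in C^{2,\alpha,\nu}$.

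The step I expect to be the main obstacle is the uniform mode-by-mode Schauder estimate $\|a_{ij}\|_{2,\alpha,\nu}\leq C\,\|f_{ij}\|_{0,\alpha,\nu}$ with $C$ independent of $(i,j)$, since $\lambda_{j}\to\infty$ and naive bounds from the explicit Green's kernel would blow up. The standard device, as in \cite{MS,MP,MP2}, is to pass to the cylindrical coordinate $T=-\log\rho$ near the singular locus, so that the rescaled operator becomes uniformly elliptic on annular shells of bounded geometry; interior Schauder estimates on these shells, combined with a frequency-localization argument on the $\mathbb{S}^{2n-1}\times\mathbb{S}^{1}$ factor, deliver the uniform constant. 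A separate, much easier argument at the smooth inner boundary $\sigma=0$ comes from standard elliptic regularity.

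Finally, summing the mode-wise right inverses and using the density of finite spherical-harmonic sums in the edge Hölder space produces a bounded right inverse $G:C^{0,\alpha,\nu}\to C^{2,\alpha,\nu}$, which gives the claimed surjectivity. The assumption $\nu<\tfrac12$ enters only to avoid an indicial root landing on the weight, which would otherwise produce a logarithmic obstruction and close off the Fredholm range; allowing $\nu$ to approach $\tfrac12$ from below is therefore optimal within this framework.
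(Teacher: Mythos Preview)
The paper does not prove this lemma at all: it is stated purely as a citation of \cite[Theorem 4.54]{MS}, with no argument given beyond the indicial-root setup that precedes it. Your proposal is a plausible high-level outline of the edge-operator machinery from \cite{MS,M} that underlies the cited theorem, and your reading of the weight condition $\nu<\tfrac12$ via the indicial roots $\gamma_j^{\pm}$ agrees with the paper's own preparatory computation. But there is nothing in the present paper to compare against; the authors simply import the surjectivity statement from the literature.

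As a sketch of the Mazzeo--Smale argument your outline is broadly on track, though it remains a sketch: the steps you flag as ``the main obstacle'' (uniform-in-mode Schauder bounds and the summation of mode-wise inverses in the weighted H\"older space) are exactly where the real work in \cite{MS,M} lies, and you have described the strategy rather than carried it out. If you intend this as a self-contained proof rather than a pointer to \cite{MS}, those steps would need to be filled in; if you intend it as the paper does, a one-line citation suffices.
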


\noindent
We define the set $\mathcal{T}$ of smooth (let us say $C^{3,\alpha}$ at least) diffeomorphisms $\tau$ such that they preserve the contact structure at $ \mathbb{S}^{1}$, namely $\tau^{*}\theta_{| \mathbb{S}^{1}}=\theta$.
\begin{proposition}
The map $K$ is $C^{\infty}$ from a neighborhood $\mathcal{N}$ of $(id,0)\in \mathcal{T}\times C^{2,\alpha,\nu}( \mathbb{S}^{2n-1}\times \mathcal{H}^{2})$ to $C^{0,\alpha,\nu}( \mathbb{S}^{2n-1}\times \mathcal{H}^{2})$.
\end{proposition}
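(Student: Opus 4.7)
The plan is to split $K$ additively as
\begin{equation*}
K(\tau,w) = L_{\theta(\tau)}(1+w) + \tfrac{n}{4(n+1)}\S_{\theta_{k,N}}(1+w)^{p-1},
\end{equation*}
and verify smoothness of each summand separately on a small neighborhood $\mathcal{N}$ of $(id,0)$. Note that $K(id,0)=0$ since $L_{\theta_{k,N}}(1) = -\tfrac{n}{4(n+1)}\S_{\theta_{k,N}}$ by definition of $L_\theta$, so at least the base point is mapped into $C^{0,\alpha,\nu}$.

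First I would handle the nonlinear superposition $w\mapsto(1+w)^{p-1}$. For $w$ small in $C^{2,\alpha,\nu}$ with $\nu>0$, the weight forces $w$ to vanish uniformly at the singular set, so $1+w$ stays bounded away from $0$ pointwise. Since $x\mapsto x^{p-1}$ is $C^\infty$ on a neighborhood of $1$, the superposition is $C^\infty$ from a $C^{2,\alpha,\nu}$-neighborhood of $0$ into $C^{2,\alpha,\nu}$ by Taylor expansion together with the standard multiplication properties of the weighted H\"older spaces (each factor $(1+w)^{p-1-j}$ stays bounded in $C^{2,\alpha,0}$ while the remaining polynomial correction lies in $C^{2,\alpha,\nu}$). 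The constant prefactor $\tfrac{n}{4(n+1)}\S_{\theta_{k,N}}$ and the embedding $C^{2,\alpha,\nu}\hookrightarrow C^{0,\alpha,\nu}$ preserve this smoothness.

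For the linear piece $L_{\theta(\tau)}(1+w)$, the key observation is that $\tau\in\mathcal{T}$ close to the identity and preserving the contact structure along $\Sph^1$ implies, after composition with the Cayley and polar coordinate changes leading to $\theta_{k,N}$, that $\tau^*\theta_{k,N}$ is exactly of the form $\theta_{k,N}+O(r^2)\beta$ with $\beta$ a smooth one-form. This is precisely the class treated in the preceding proposition, so $\S_{\theta(\tau)} = \S_{\theta_{k,N}} + O(r)$ and the conformal factor $u(\tau)$ relating $\theta(\tau)$ to $\theta_{k,N}$ depends smoothly on $\tau$ with $u(id)\equiv 1$. Using the conformal covariance identity \eqref{eq: id1}, one rewrites $L_{\theta(\tau)}(1+w)$ as $\Delta_{\theta_{k,N}}$ applied to a smooth-in-$\tau$ multiplicative modification of $1+w$, plus curvature-type lower order terms. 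Since the weighted spaces $C^{k,\alpha,\nu}$ are tailored (following \cite{MS}) so that $\Delta_{\theta_{k,N}}:C^{2,\alpha,\nu}\to C^{0,\alpha,\nu}$ is bounded, this realizes $\tau\mapsto L_{\theta(\tau)}$ as a smooth family of bounded linear operators.

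Joint $C^\infty$-smoothness then follows from the decomposition $K(\tau,w) = A(\tau)w + f(\tau) + B(w)$, where $A(\tau)=L_{\theta(\tau)}$ is a smooth family of bounded operators, $f(\tau) = L_{\theta(\tau)}(1)+\tfrac{n}{4(n+1)}\S_{\theta_{k,N}}$ is a smooth curve in $C^{0,\alpha,\nu}$ vanishing at $\tau=id$, and $B$ is the smooth superposition from the first step. The main obstacle is verifying that perturbations of $\tau$ within $\mathcal{T}$ induce only admissible singular behavior of order $O(r^2)$ near $\tau(\Sph^1)$: this is exactly where the contact-preserving condition $\tau^*\theta_{|\Sph^1}=\theta$ in the definition of $\mathcal{T}$ is essential, since without it one would pick up lower-order singular terms of the form $O(r)\beta$, which would destroy the mapping property into $C^{0,\alpha,\nu}$.
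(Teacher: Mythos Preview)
Your proposal is correct and follows essentially the same route as the paper: both rely on the contact-preserving condition in $\mathcal{T}$ to write $\theta(\tau)=\theta_{k,N}+O(r^2)\beta$, invoke the preceding proposition to get $\S_{\theta(\tau)}=\S_{\theta_{k,N}}+O(r)$, and then check that each term of $K(\tau,w)-K(id,0)$ lands in $C^{0,\alpha,\nu}$. One small slip: the perturbed form is $\theta(\tau)=u^{2/n}\tau^*\thetaS{n}$, not $\tau^*\theta_{k,N}$ (these differ by a conformal factor $(u/(u\circ\tau))^{2/n}$), though this does not affect the $O(r^2)$ expansion or the argument.
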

\begin{proof}
It is clear that $\mathcal{N}$ is mapped to $C^{0,\alpha}_{loc}$. Without loss of generality we can assume that $\theta(\tau)=\theta_{k,N}+O(r^{2})\beta$ so by Proposition 3.3, we compute
$$K(\tau,w)-K(Id,0)=$$
$$=\Delta_{\theta(\tau)}(1+w)-\Delta_{\theta_{k,N}}1-\frac{n}{4(n+1)}(\S_{\theta(\tau)}(1+w)- \S_{\theta_{k,N}})+\frac{n}{4(n+1)}\S_{\theta_{k,N}}((1+w)^{p-1}-1)$$
Clearly $\Delta_{\theta(\tau)}(1+w)-\Delta_{\theta_{k,N}}1\in C^{0,\alpha,\nu}$. Next, we have that $\S_{\theta(\tau)}=\S_{\theta_{k,N}}+O(r)$ hence, the second term also belongs to $C^{0,\alpha,\nu}$ and similarly for the third term. The higher order derivatives of $K$ can be treated in a similar way.
\end{proof}

\begin{theorem}
Let $0<\nu<\frac{1}{2}$, then there exist a closed subspace $W$ such that $C^{2,\alpha,\nu}=W\oplus \mathcal{K}(\alpha,\nu)$ and a smooth map $\Phi:\mathcal{N}\subset \mathcal{T}\times \mathcal{K}(\alpha,\nu)\to W$ such that $K(\tau,w)=0$, where $w=(\Phi(\tau,w_{1}),w_{1})\in W\oplus \mathcal{K}(\alpha,\nu)$.
\end{theorem}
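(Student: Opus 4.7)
The plan is a standard Lyapunov--Schmidt reduction based on the implicit function theorem in Banach spaces, using the surjectivity statement of Lemma \ref{lemsm} together with the smoothness of $K$ established in the previous proposition. First, I would observe that $K(\mathrm{id},0)=0$, since when $\tau=\mathrm{id}$ we have $\theta(\tau)=\theta_{k,N}$, so the constant function $1$ is a solution of the Yamabe equation with curvature $\S_{\theta_{k,N}}$. Next, I would identify the linearization at this base point: by direct computation one has
\begin{equation*}
\partial_{w}K(\mathrm{id},0)\,\dot{w}=\Delta_{\theta_{k,N}}\dot{w}+2(n-2k-2)\,\dot{w}=L_{1}\dot{w},
\end{equation*}
which is exactly the operator studied above (for $k=0$). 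By Lemma \ref{lemsm}, $L_{1}:C^{2,\alpha,\nu}\to C^{0,\alpha,\nu}$ is surjective, and its kernel is $\mathcal{K}(\alpha,\nu)$.

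The second step is to produce the closed complement $W$. The indicial root analysis preceding Lemma \ref{lemsm} shows that $\mathcal{K}(\alpha,\nu)$ decomposes along the $\mathbb{S}^{1}\times\mathbb{S}^{2n-1}$ spherical harmonic expansion into finitely many modes whose Bessel-type radial ODEs admit a bounded solution at both endpoints; the regularity of the indicial roots at $\sigma=0$ and at $\sigma=1$ forces $\mathcal{K}(\alpha,\nu)$ to be finite dimensional. A finite-dimensional subspace of a Banach space always admits a closed complement (choose, for instance, a continuous projection $\Pi:C^{2,\alpha,\nu}\to\mathcal{K}(\alpha,\nu)$ using a basis of $\mathcal{K}$ and a dual basis of continuous linear functionals, and set $W=\ker\Pi$). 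With this choice, $L_{1}|_{W}:W\to C^{0,\alpha,\nu}$ is a continuous bijection; by the open mapping theorem it is a Banach space isomorphism.

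With the splitting in hand, I would decompose $w=w_{0}+w_{1}$ with $w_{0}\in W$ and $w_{1}\in \mathcal{K}(\alpha,\nu)$, and consider the map
\begin{equation*}
\tilde{K}:\mathcal{T}\times W\times \mathcal{K}(\alpha,\nu)\longrightarrow C^{0,\alpha,\nu},\qquad \tilde{K}(\tau,w_{0},w_{1})=K(\tau,w_{0}+w_{1}).
\end{equation*}
By the previous proposition, $\tilde{K}$ is $C^{\infty}$ in a neighborhood of $(\mathrm{id},0,0)$, $\tilde{K}(\mathrm{id},0,0)=0$, and the partial derivative
\begin{equation*}
\partial_{w_{0}}\tilde{K}(\mathrm{id},0,0)=L_{1}|_{W}:W\longrightarrow C^{0,\alpha,\nu}
\end{equation*}
is an isomorphism. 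The Banach space implicit function theorem then produces a neighborhood $\mathcal{N}\subset\mathcal{T}\times\mathcal{K}(\alpha,\nu)$ of $(\mathrm{id},0)$ and a smooth map $\Phi:\mathcal{N}\to W$ with $\Phi(\mathrm{id},0)=0$ such that $\tilde{K}(\tau,\Phi(\tau,w_{1}),w_{1})=0$ for every $(\tau,w_{1})\in\mathcal{N}$. Setting $w=(\Phi(\tau,w_{1}),w_{1})\in W\oplus\mathcal{K}(\alpha,\nu)$ gives the claimed conclusion.

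The step requiring the most care is ensuring that the Banach calculus is truly applicable: namely, that the weighted Hölder spaces $C^{k,\alpha,\nu}(\mathbb{S}^{2n-1}\times\mathcal{H}^{2})$ are genuine Banach spaces on which $K$ is smooth with the claimed linearization (which relies on the expansion of $\S_{\theta(\tau)}$ from Proposition 3.3 to guarantee that all error terms arising from the conformal change and from varying $\tau$ indeed land in $C^{0,\alpha,\nu}$), and that the kernel $\mathcal{K}(\alpha,\nu)$ is finite dimensional so that a topological complement $W$ exists. Once these two analytic points are granted—the first via the previous proposition and the second via the indicial root analysis—the remainder is a textbook implicit function theorem argument.
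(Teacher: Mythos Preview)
Your proposal is correct and follows exactly the route the paper indicates: the paper's own proof is the single sentence ``The proof is a direct corollary from the implicit function theorem and Lemma \ref{lemsm},'' and what you have written is precisely the standard Lyapunov--Schmidt unpacking of that sentence, supplying in addition the finite-dimensionality argument for $\mathcal{K}(\alpha,\nu)$ that justifies the existence of the closed complement $W$.
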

\begin{proof}
The proof is a direct corollary from the implicit function theorem and Lemma \ref{lemsm}.
\end{proof}

\noindent
As a corollary, we get our first application Theorem \ref{thm: perturbation}.

\subsection{Existence by bifurcation}

\noindent
In this last section we will show the existence of another kind of solutions, via bifurcation, following the work \cite{BPS}. We recall again that $L=\Delta_{\mathbb{S}^{2n-1}}+4\Delta_{\mathcal{H}^{2}}$ and we propose to solve the problem
\begin{equation}\label{eqhyp}
-Lu+\frac{n}{4(n+1)}\S_{\theta_{0,n-1}}u=\frac{n}{4(n+1)}\kappa u^{p-1},
\end{equation}
where $\kappa$ is a positive constant. After taking the quotient of $\mathcal{H}^{2}$ by a Fuchsian group $\Gamma \subset PSL(2,\R)$ we can reduce the study to the manifold $M=\C P^{n-1}\times \Sigma_{\Gamma}$, where $\Sigma_{\Gamma}=\mathcal{H}^{2}/\Gamma$ and $\C P^{n-1}=\mathbb{S}^{2n-1}/\mathbb{S}^{1}$ since the vector field $T$ generate an $\mathbb{S}^{1}$ isometric action corresponding to the Hopf fibration. From now on, we will write $\Sigma$ instead of $\Sigma_{\Gamma}$ and we define the space $\mathcal{M}(\Sigma)$ of hyperbolic metrics on $\Sigma$. In this way we can track the change of the hyperbolic structure by using the metrics $g$. Now, given $g\in \mathcal{M}(\Sigma)$, we define the Banach manifold
$$\mathcal{M}_{\Sigma,g}=\left\{u\in H^{1}(M);\int_{M}u^{p}dv_{g}=Vol_{g}(M); u\geq 0\right\},$$
and the functional defined on it
$$\mathcal{A}_{g}(u)=\frac{1}{2}\int_{M}|\nabla_{M,g} u|^{2}+\frac{n}{4(n+1)}\S_{\theta_{0,n-1}}u^{2}dv,$$
where $\nabla_{M,g}=\nabla_{\C P^{n-1}}\oplus 2\nabla_{\Sigma,g}$. Clearly, critical points of $\mathcal{A}_{g}$ lift to  solutions to the problem (\ref{eqhyp}). We notice also that $1$ is always a solution to our problem with $\kappa=\S_{\theta_{0,n-1}}$. We have then,
$$\nabla \mathcal{A}_{g}(u)=L_{M}u+\frac{n}{4(n+1)}\S_{\theta_{0,n-1}}u-\frac{n}{4(n+1)}\kappa u^{p-1},$$
where $L_{M}=-\Delta_{\C P^{n-1}}-4\Delta_{\Sigma,g}$ and
$$J_{\Sigma,g}=\nabla^{2}\mathcal{A}_{g}(1)=L_{M}-2(n-2).$$
We want to investigate the negative eigenvalues of $J_{\Sigma,g}$, which correspond to the Morse index of $\mathcal{A}_{g}$ at the critical point $1$. So we consider the number
$$n_t(\Sigma,g):=\max \{k\in \N  \; : \; \lambda_k(\Sigma,g)\leq t\}$$
where $\lambda_k(\Sigma,g)$ the are the eigenvalues of the Laplacian on $(\Sigma,g)$. The next two lemmas are in \cite{BPS}.

\begin{lemma}[\cite{BPS}]
Let $t>\frac{1}{4}$, and fix $g_{0}\in \mathcal{M}(\Sigma)$, then for any $k\in \mathbb{N}$, there exists $g_{1}\in \mathcal{M}$ such that $n_t(\Sigma,g_{1}) \geq k+ n_t(\Sigma,g_{0})$.
\end{lemma}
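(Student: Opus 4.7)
The idea is to construct $g_{1}\in\mathcal{M}(\Sigma)$ by pinching a simple closed geodesic of $(\Sigma,g_{0})$ very short, and then to exploit that $t>1/4$ lies strictly above the bottom of the essential spectrum of the Laplacian on a hyperbolic cusp in order to produce arbitrarily many $H^{1}$-test functions with small Rayleigh quotient localized in the resulting long collar. Writing $m:=n_{t}(\Sigma,g_{0})$, the Courant--Fischer variational principle reduces the claim to exhibiting $g_{1}\in\mathcal{M}(\Sigma)$ together with a $(k+m)$-dimensional subspace $V\subset H^{1}(\Sigma)$ on which $\int_{\Sigma}|\n u|_{g_{1}}^{2}\,dv_{g_{1}}\leq t\int_{\Sigma}u^{2}\,dv_{g_{1}}$ for every $u\in V$. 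Observe that one does \emph{not} have to track the existing eigenfunctions of $g_{0}$: the lemma follows as soon as $\sup_{g\in\mathcal{M}(\Sigma)}n_{t}(\Sigma,g)=\infty$, and we will manufacture all $k+m$ required test functions directly on a single $g_{1}$.

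Concretely, fix a simple closed geodesic $\gamma$ on $(\Sigma,g_{0})$, complete it to a Fenchel--Nielsen pair-of-pants decomposition, and for $\epsilon>0$ small let $g_{\epsilon}\in\mathcal{M}(\Sigma)$ be the unique hyperbolic metric agreeing with $g_{0}$ in all Fenchel--Nielsen coordinates save for $\ell_{g_{\epsilon}}(\gamma)=\epsilon$. By the Collar Lemma, $\gamma$ admits an embedded tubular neighborhood isometric, in coordinates $(\vartheta,r)\in(\R/\epsilon\mathbb{Z})\times(-w(\epsilon),w(\epsilon))$, to the hyperbolic cylinder with metric $\cosh^{2}(r)\,d\vartheta^{2}+dr^{2}$, where $w(\epsilon)\sim\log(1/\epsilon)\to\infty$ as $\epsilon\to 0^{+}$. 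The end $r\to+w(\epsilon)$ is asymptotic to a standard hyperbolic cusp, whose Laplacian admits generalized eigenfunctions $e^{-r/2}\cos(\alpha r)$ of spectral parameter $\tfrac{1}{4}+\alpha^{2}$ sweeping out $[\tfrac{1}{4},\infty)$.

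Pick $\alpha\in(0,\sqrt{t-1/4})$, a fixed smooth cutoff $\chi$ supported in $(-\tfrac{1}{2},\tfrac{1}{2})$ with $\chi\not\equiv 0$, and a large parameter $L$. For $\epsilon$ sufficiently small that $w(\epsilon)\geq L_{0}+(k+m)L$ (with $L_{0}$ chosen so that $\cosh(r)\sim e^{r}/2$ for $r\geq L_{0}$), select $k+m$ disjoint subintervals $[r_{j}-L/2,r_{j}+L/2]\subset[L_{0},w(\epsilon)]$ and set
\begin{equation*}
\psi_{j}(\vartheta,r):=e^{-r/2}\cos(\alpha(r-r_{j}))\,\chi\!\left(\frac{r-r_{j}}{L}\right),\qquad j=1,\dots,k+m.
\end{equation*}
After the substitution $u=e^{-r/2}v$, the radial part of the cylinder Laplacian reduces asymptotically (for $r\to+\infty$) to a Euclidean Schr\"{o}dinger-type operator with effective eigenvalue $\lambda-\tfrac{1}{4}$; a direct computation then yields the Rayleigh quotient $\tfrac{1}{4}+\alpha^{2}+O(L^{-1})\leq t$ for $L$ fixed sufficiently large. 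Since the $\psi_{j}$ have pairwise disjoint supports they are linearly independent and span a $(k+m)$-dimensional subspace of $H^{1}(\Sigma)$ on which the $g_{\epsilon}$-Rayleigh quotient is everywhere $\leq t$; setting $g_{1}:=g_{\epsilon}$ for any such $\epsilon$ gives $n_{t}(\Sigma,g_{1})\geq k+m$.

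The main technical point is controlling the $O(L^{-1})$ error coming from the cutoff and from the asymptotic form of $\cosh$, which is routine once one performs the change of variable $u=e^{-r/2}v$ that factors out the exponential weight. The essential feature of the construction is that the hypothesis $t>1/4$ opens a genuine interval $(\tfrac{1}{4},t)$ inside the essential spectrum of the limiting cusp, into which arbitrarily many eigenvalues of $g_{\epsilon}$ fall as $\epsilon\to 0^{+}$; the integer $m=n_{t}(\Sigma,g_{0})$ enters only through the chosen dimension of the test-function space.
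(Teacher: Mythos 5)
The paper itself offers no proof of this lemma: it is imported verbatim from \cite{BPS}, where it rests on the spectral theory of degenerating hyperbolic surfaces (eigenvalues accumulating on $[1/4,\infty)$ as a closed geodesic is pinched). Your argument is a correct, self-contained quasimode proof of the same fact, and your structural observation is the right one: since $m=n_t(\Sigma,g_0)$ is a fixed finite number, it suffices to show $\sup_{g}n_t(\Sigma,g)=\infty$, so there is no need to track the eigenfunctions of $g_0$. The computation is the standard one identifying $[1/4,\infty)$ as the essential spectrum of a hyperbolic cusp: with the collar weight $\cosh r\sim e^{r}/2$, the substitution $u=e^{-r/2}v$ turns the radial Rayleigh quotient into $\tfrac14+\int (v')^{2}/\int v^{2}$ (the cross term $-\tfrac12\int (v^{2})'$ vanishing because $v$ is compactly supported), and the disjoint supports make both quadratic forms block-diagonal on the span of the $\psi_j$, so Courant--Fischer applies. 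What your route buys is an explicit, elementary argument with a quantitative count (of order $\log(1/\epsilon)/L$ eigenvalues below $t$, consistent with the known asymptotics for pinching families); what the citation buys is brevity. Three small points to tidy up: (i) the cusp forms at the core $r=0$ of the collar as $\epsilon\to 0$, not at the end $r\to +w(\epsilon)$ --- the region $[L_0,w(\epsilon)]$ is a truncated cusp read from the thick part inward --- but since you compute directly with the weight $\cosh r$ this is only a matter of wording; (ii) with the convention $\lambda_0=0$ and $n_t=\max\{k:\lambda_k\le t\}$, a $(k+m)$-dimensional test space only forces $\lambda_{k+m-1}\le t$, so take $k+m+1$ quasimodes to absorb the possible off-by-one; (iii) state explicitly that each $\psi_j$, extended by zero, is Lipschitz with compact support in the open collar and hence lies in $H^{1}(\Sigma)$.
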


\begin{lemma}[\cite{BPS}]\label{lemnd}
Given a hyperbolic surface $\Sigma$, then the set $\mathcal{M}_{\lambda}(\Sigma)=\{g\in \mathcal{M}(\Sigma); \lambda \not \in \sigma(-\Delta_{\Sigma,g})\}$ is open and dense in $\mathcal{M}(\Sigma)$.
\end{lemma}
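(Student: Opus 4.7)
The plan is to prove the two assertions separately: openness is a straightforward spectral continuity argument, while density is the main content and relies on real-analyticity of the eigenvalues on Teichm\"uller space.

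For openness, I would fix $g_{0}\in \mathcal{M}_{\lambda}(\Sigma)$. Since $-\Delta_{\Sigma,g_{0}}$ has purely discrete spectrum accumulating only at $+\infty$ and $\lambda\notin\sigma(-\Delta_{\Sigma,g_{0}})$, there is an $\epsilon>0$ with $[\lambda-\epsilon,\lambda+\epsilon]\cap\sigma(-\Delta_{\Sigma,g_{0}})=\emptyset$. Standard spectral perturbation theory (Kato--Rellich, or direct min-max bounds applied to the principal symbols) shows that the spectrum depends continuously on $g$ in, say, the $C^{2}$ topology of metrics; hence no eigenvalue enters this gap for $g$ sufficiently close to $g_{0}$, giving a whole neighborhood inside $\mathcal{M}_{\lambda}(\Sigma)$.

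For density, the key point is that $\mathcal{M}(\Sigma)$ is locally parametrized by a finite-dimensional real-analytic Teichm\"uller space on which the hyperbolic metric depends real-analytically. The resulting family of self-adjoint Laplacians is a real-analytic family of type (B) in Kato's sense, so the eigenvalues can be organized into real-analytic branches $\lambda_{k}(g)$ (crossings being resolved by choosing the appropriate analytic labelling). Consequently, for each branch the set $\{g : \lambda_{k}(g)=\lambda\}$ is either the entire local chart or a proper real-analytic subvariety of positive codimension, and in the latter case it is nowhere dense. Because the spectrum is discrete, only finitely many branches can attain the value $\lambda$ near any fixed $g_{0}$, so $\mathcal{M}_{\lambda}(\Sigma)$ is locally the complement of a finite union of nowhere dense sets, hence dense.

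The main obstacle is ruling out the degenerate alternative, namely that some $\lambda_{k}(g)$ is identically equal to $\lambda$ on an open subset of $\mathcal{M}(\Sigma)$. This is precisely where the preceding lemma (or the underlying pinching/collar construction on hyperbolic surfaces) intervenes: since eigenvalues can be pushed below any prescribed threshold by shrinking a simple closed geodesic, no eigenvalue function is locally constant on $\mathcal{M}(\Sigma)$. Combined with the analytic branches argument, this forces every level set $\{\lambda_{k}=\lambda\}$ to be a proper subvariety, and density follows.
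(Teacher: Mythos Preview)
The paper does not prove this lemma: it is quoted from \cite{BPS} and used as a black box, so there is no ``paper's own proof'' to compare against. Your outline is an attempt to supply what the paper deliberately omits.

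Your strategy---openness from spectral continuity, density from real-analyticity of the eigenvalue branches over Teichm\"uller space plus a non-constancy input---is the standard one and is essentially what \cite{BPS} does. One step deserves tightening. To exclude the degenerate alternative (an analytic branch identically equal to $\lambda$) you invoke the preceding lemma, but that lemma only says one can \emph{increase} the count $n_{t}(\Sigma,g)$ of eigenvalues below a threshold $t>\tfrac14$; it lets you push \emph{additional} ordered eigenvalues downward, not necessarily dislodge a branch already sitting at $\lambda$. What you actually need is the existence of \emph{some} $g\in\mathcal{M}(\Sigma)$ with $\lambda\notin\sigma(-\Delta_{\Sigma,g})$, and the pinching statement as quoted does not give this directly. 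In \cite{BPS} this is handled via the finer collar/degeneration analysis (or, equivalently, via variation formulas showing the eigenvalues have nonzero differential along suitable Teichm\"uller directions). Once that non-constancy is secured, your real-analytic subvariety argument goes through and both openness and density follow as you describe.
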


\noindent
Now we notice that the eigenvalues of $J_{\Sigma,g}$ take the form
$$\lambda_{\ell}=4\lambda_j(\Sigma,g)+\lambda_k(\C P^{n-1})-2(n-2).$$
\begin{corollary}
Let $n\geq 3$, and let $d\in \N$. Then there exists $g\in \mathcal{M}(\Sigma)$ such that $J_{\Sigma,g}$ has at least $d$ negative eigenvalues.
\end{corollary}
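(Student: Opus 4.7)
The plan is to exploit the product structure of the spectrum of $J_{\Sigma,g}$ just recorded, together with the two preceding lemmas. Recall the explicit formula
$$\lambda_{\ell}=4\lambda_j(\Sigma,g)+\lambda_k(\C P^{n-1})-2(n-2),$$
so an eigenvalue of $J_{\Sigma,g}$ is negative as soon as $4\lambda_j(\Sigma,g)+\lambda_k(\C P^{n-1})<2(n-2)$. The cheapest way to achieve this is to take the trivial eigenvalue $\lambda_k(\C P^{n-1})=0$ (constant functions on $\C P^{n-1}$), which reduces the condition to $\lambda_j(\Sigma,g)<\frac{n-2}{2}$.

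The assumption $n\geq 3$ is exactly what we need here: it gives $\frac{n-2}{2}\geq \frac{1}{2}>\frac{1}{4}$, so we may pick a threshold
$$t\in\left(\frac{1}{4},\,\frac{n-2}{2}\right).$$
Starting from any reference metric $g_0\in\mathcal{M}(\Sigma)$, I would apply the first lemma (with this $t$ and with integer parameter equal to $d$) to obtain a metric $g\in\mathcal{M}(\Sigma)$ such that $n_t(\Sigma,g)\geq d$. By definition of $n_t$, this means that $-\Delta_{\Sigma,g}$ possesses at least $d$ eigenvalues $\lambda_{j_1},\dots,\lambda_{j_d}\leq t<\frac{n-2}{2}$, counted with multiplicity.

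Finally, I would assemble the $d$ corresponding negative eigenvalues of $J_{\Sigma,g}$. Let $\psi_{j_1},\dots,\psi_{j_d}\in C^{\infty}(\Sigma)$ be linearly independent eigenfunctions associated to $\lambda_{j_1},\dots,\lambda_{j_d}$. Viewing each $\psi_{j_i}$ as a function on $M=\C P^{n-1}\times\Sigma$ constant in the $\C P^{n-1}$-direction, we get
$$J_{\Sigma,g}\,\psi_{j_i}=L_M\psi_{j_i}-2(n-2)\psi_{j_i}=\bigl(4\lambda_{j_i}-2(n-2)\bigr)\psi_{j_i},$$
and the right-hand side is strictly negative. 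Linear independence of the $\psi_{j_i}$ on $\Sigma$ passes trivially to linear independence on $M$, so $J_{\Sigma,g}$ has at least $d$ linearly independent eigenfunctions with negative eigenvalues.

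There is no real obstacle here: once the spectrum of $J_{\Sigma,g}$ has been split as a sum of spectra on $\C P^{n-1}$ and on $\Sigma$, the statement reduces to producing many small eigenvalues of $-\Delta_{\Sigma,g}$, which is precisely the content of Lemma 4.4, and the hypothesis $n\geq 3$ is used exactly to make the window $(\frac{1}{4},\frac{n-2}{2})$ for the threshold $t$ nonempty. If one wished to allow $t=\frac{n-2}{2}$ while preserving strict negativity, Lemma 4.5 would supply a further generic perturbation of $g$ removing $\frac{n-2}{2}$ from $\sigma(-\Delta_{\Sigma,g})$, but the strict choice $t<\frac{n-2}{2}$ above already avoids this issue.
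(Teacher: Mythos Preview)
Your proof is correct and follows essentially the same route as the paper's: both take $\lambda_k(\C P^{n-1})=0$, reduce to finding many eigenvalues of $-\Delta_{\Sigma,g}$ below $\frac{n-2}{2}$, and invoke the first BPS lemma with a threshold $t>\frac{1}{4}$ (the paper phrases this as $2(n-2)>1$ and eigenvalues accumulating in $(\frac{1}{4},\frac{1}{4}+\varepsilon)$). Your version is simply more explicit about the eigenfunctions and about why the hypothesis $n\geq 3$ is needed.
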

\begin{proof}
Indeed, we always have
$$1<2(n-2)<\lambda_1(\C P^{n-1})=4n.$$
Hence, one looks for eigenvalues of the form $\lambda_{\ell}=4\lambda_j(\Sigma,g)-2(n-2)$. Since $2(n-2)>1$, we can always find $g\in \mathcal{M}(\Sigma)$ such that $\sigma(-\Delta_{\Sigma,g})\cap (\frac{1}{4},\frac{1}{4}+\varepsilon)$ is arbitrarily large. Which proves the claim.
\end{proof}

\noindent
In order to prove existence and multiplicity results for our problem, we will show the existence of bifurcation points while perturbing the metric. We will use the following definition of bifurcation \cite{LPZ}:

\begin{definition}
Given two Banach spaces $B_{0}$ and $B_{1}$ and a $C^{1}$-family of submanifolds $[0,1]:\lambda \mapsto D_{\lambda}\subset B_{1}$ and subspaces $[0,1]:\lambda \mapsto E_{\lambda}\subset B_{0}$. We define the fiber bundle $\mathcal{D}=\{(x,\lambda)\in B_{1}\times [0,1]; x\in D_{\lambda}\}$ and similarly for the fiber $\mathcal{E}=\{(y,\lambda)\in B_{0}\times [0,1];y\in E_{\lambda}\}$. Let $F:\mathcal{D}\to \mathcal{E}$ be a $C^{1}$ bundle morphism. Let $\lambda\mapsto x_{\lambda}$ and $\lambda\mapsto y_{\lambda}$ be $C^{1}$ sections of $\mathcal{D}$ and $\mathcal{E}$ respectively. We say that $\lambda_{*}\in [0,1]$ is a bifurcation point of the equation
$$F(x_{\lambda},\lambda)=(y_{\lambda},\lambda)$$
if there exist a sequence $(\lambda_{n})_{n\geq 1}$ and a sequence $x_{n}\in D_{\lambda_{n}}$ such that
\begin{itemize}
\item[i)]$\lim_{n\to \infty}\lambda_{n}=\lambda_{*}$
\item[ii)]$x_{n}\not=x_{\lambda_{n}}$
\item[iii)]$\lim_{n\to \infty}x_{n}=x_{\lambda_{*}}$
\item[iv)]$F(x_{n},\lambda_{n})=(y_{\lambda_{n}},\lambda_{n})$.
\end{itemize}
\end{definition}

\noindent
Now given a path of metrics $[0,1]:t\to g_{t}\in \mathcal{M}$, the manifold $\mathcal{M}_{\Sigma,g_{t}}$, will play the role of $D_{t}$ and $F(u,t)=\nabla \mathcal{A}_{g_{t}}(u)$, in the definition above. We can see the constant solution $1$ as a section of $\mathcal{D}$, that is, $[0,1]:t\mapsto 1_{t}$, and we have
$$F(1,t)=(0,t).$$
We want to show that we have a bifurcation point for $F$ which corresponds to a sequence of solutions to equation (\ref{eqhyp}) that are arbitrarily close to $1$.
\begin{theorem}
Assume that $n\geq 3$. Given $g_{0} \in \mathcal{M}(\Sigma)$, then there exists $g_{0}' \in \mathcal{M}(\Sigma)$ arbitrarily close to $g_{0}$ and a path $(g'_{t})_{t\in[0,1]}$ joining $g_{0}'$ and $g_{1}'$ such that $F$ has at least one bifurcation point $t_{*}\in (0,1)$.
\end{theorem}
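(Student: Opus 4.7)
My plan is to apply the classical ``change of Morse index implies bifurcation'' principle along a path in the moduli space of hyperbolic structures. The goal is to exhibit two metrics $g_0'$ and $g_1'$ in $\mathcal{M}(\Sigma)$ with $g_0'$ arbitrarily close to $g_0$, both nondegenerate for the Jacobi operator $J_{\Sigma,g}=L_M-2(n-2)$ at the constant solution $1$, and having strictly different Morse indices; joining them by a path will then force a spectral crossing in the interior, at which a bifurcation argument produces non-constant solutions close to $1$.

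First I would use Lemma \ref{lemnd} to perturb $g_0$ slightly to a nondegenerate $g_0'\in\mathcal{M}(\Sigma)$, where nondegeneracy of $J_{\Sigma,g}$ means $\frac{n-2}{2}\notin\sigma(-\Delta_{\Sigma,g_0'})$; indeed, by the eigenvalue formula $\lambda_\ell=4\lambda_j(\Sigma,g)+\lambda_k(\mathbb{C}P^{n-1})-2(n-2)$, the bound $\lambda_k(\mathbb{C}P^{n-1})\geq 4n>2(n-2)$ for $k\geq 1$ forces every degeneracy to come from the $k=0$ mode, and the hypothesis $n\geq 3$ gives $\frac{n-2}{2}>\frac{1}{4}$, placing the relevant spectral threshold in the regime where Lemma \ref{lemnd} applies. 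Set $m_0:=\mathrm{ind}(J_{\Sigma,g_0'})<\infty$. Next, by the Corollary above I would take $g_1\in\mathcal{M}(\Sigma)$ with Morse index strictly greater than $m_0$ (pick $d>m_0$), and perturb once more by Lemma \ref{lemnd} to a nondegenerate $g_1'$ whose index still exceeds $m_0$. I would then choose a smooth path $t\mapsto g_t'$ in $\mathcal{M}(\Sigma)$ joining $g_0'$ and $g_1'$ and, after a further generic small deformation, arrange that eigenvalues of $J_{\Sigma,g_t'}$ cross zero transversely and one at a time.

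Along the path the spectrum of $J_{\Sigma,g_t'}$ varies continuously, so $t\mapsto\mathrm{ind}(J_{\Sigma,g_t'})$ is locally constant off the discrete set of crossings. The prescribed jump in Morse index between $t=0$ and $t=1$ forces an odd number of transverse crossings, hence at least one $t_*\in(0,1)$ with $\ker J_{\Sigma,g_{t_*}'}\neq 0$ and nonzero spectral flow. I would then invoke the abstract bifurcation theorem of \cite{LPZ} (as used in \cite{BPS}) applied to the bundle morphism $F(u,t)=(\nabla\mathcal{A}_{g_t'}(u),t)$ along the trivial section $1_t\equiv 1$: the constant $1$ is a critical point for every $t$, the linearization at $1$ equals $J_{\Sigma,g_t'}$, and the odd-parity crossing at $t_*$ forces $(1,t_*)$ to be a bifurcation point, producing sequences of solutions of \eqref{eqhyp} arbitrarily close to $1$ with parameters converging to $t_*$. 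The main obstacle will be setting up the Fredholm framework cleanly --- choosing Banach spaces on which $J_{\Sigma,g_t'}$ is a $C^1$ family of self-adjoint Fredholm operators with uniform estimates, and verifying the odd-crossing (spectral flow) hypothesis of the bifurcation criterion --- but both are standard once the explicit spectral picture described above is in place.
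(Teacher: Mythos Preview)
Your proposal is correct and follows essentially the same route as the paper: perturb $g_0$ to a nondegenerate $g_0'$ via Lemma~\ref{lemnd}, produce a second nondegenerate $g_1'$ with strictly larger Morse index via the Corollary, join them by a path in $\mathcal{M}(\Sigma)$, and invoke the abstract bifurcation theorem of \cite{LPZ}. Two minor remarks: Lemma~\ref{lemnd} carries no threshold restriction on $\lambda$, so the condition $\tfrac{n-2}{2}>\tfrac14$ is really what makes the Corollary (via the first BPS lemma) work, not what makes Lemma~\ref{lemnd} applicable; and the extra genericity step arranging transverse one-at-a-time crossings is unnecessary, since \cite[Theorem~A.2]{LPZ} only requires nondegeneracy at the endpoints together with a jump in Morse index.
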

\begin{proof}
We use the bifurcation theorem proved in \cite[Theorem A.2]{LPZ}. First, we notice that for all metrics $g\in \mathcal{M}(\Sigma)$ the operator $J_{\Sigma,g}$ is symmetric and Fredholm of index $0$. We consider now a metric $g_{0}\in \mathcal{M}(\Sigma)$. If $J_{\Sigma,g_{0}}$ is degenerate ($\ker J_{\Sigma,g_{0}}\not=0$, so $1$ is a degenerate critical point for $\mathcal{A}_{g_{0}}$), then by Lemma \ref{lemnd}, we can choose $g_{0}'\in \mathcal{M}(\Sigma)$ arbitrarily close to $g_{0}$ and such that $J_{\Sigma,g_{0}'}$ is invertible (i.e. $\mathcal{A}_{g_{0}'}$ is Morse at $1$), so we let $\mu(g_{0})$ its Morse index.  Using Lemma \ref{lemnd}, we can choose yet another metric $g_{1}'\in \mathcal{M}(\Sigma)$ such that $\mathcal{A}_{g_{1}'}$ is Morse at the critical point $1$ and $\mu(g_{1}')-\mu(g_{0}')\not=0$. In order to conclude now, we consider a smooth path $g_{t}'$ connecting $g_{0}'$ to $g_{1}'$ (such a path exists since $\mathcal{M}(\Sigma)$ is path connected). It is enough to notice now that $d_{1}F(\cdot,t)=J_{\Sigma,g_{t}'}$. Hence, the assumptions of the bifurcation theorem \cite{LPZ} are satisfied and we have at least one bifurcation point $t_{*}\in (0,1)$.
\end{proof}

\noindent
As a corollary, we get our second application Theorem \ref{thm: bifurcation}.


\begin{thebibliography}{99}

\bibitem{Aviles-McOwen 1985} P. Aviles, R. McOwen, Conformal deformations of complete manifolds with negative
curvature, J. Differential Geometry 21 (1985) 269-281.

\bibitem{Aviles-McOwen 1988} P. Aviles, R. McOwen, Complete conformal metrics with negative scalar curvature in compact Riemannian Manifolds, Duke Math. J. 56 (1988) 395-398.

\bibitem{Aviles-McOwen 2 1988} P. Aviles, R. McOwen, Conformal deformation to constant negative scalar curvature on noncompact Riemannian manifolds, J. Differential Geometry 27 (1988) 225-239.

\bibitem{BPS} R.G. Bettiol, P. Piccione, B. Santoro, Bifurcation of periodic solutions to the singular Yamabe problem on spheres, J. Differential Geom. Volume 103, Number 2 (2016), 191-205

\bibitem{LPZ} L.L. de Lima, P. Piccione,  M. Zedda, On bifurcation of solutions of the Yamabe problem in product manifolds, Ann. Inst. H. Poincar\'{e} Anal. Non Lin\'{e}aire 29 (2012), no. 2, 261-277.

\bibitem{FGMT} Frank R., Gonz\'{a}lez M.M., Monticelli D.D., Tan J., An extension problem for the CR fractional Laplacian. Adv. Math. 270 (2015), 97-137.

\bibitem{jerison-lee 1987}  D. Jerison, J.M. Lee, The Yamabe problem on CR manifolds, J. Differential Geom. 25 (1987), no. 2, 167-197.

\bibitem{jerison-lee 1988}  D. Jerison, J.M. Lee, Extremals for the Sobolev inequality on the Heisenberg group and the CR Yamabe problem, J. Amer. Math. Soc. 1 (1988), no. 1, 1-13.

\bibitem{Loewner-Nirenberg 1974} C. Loewner, L. Nirenberg, Partial differential equations invariant under conformal or projective transformations, Contributions to Analysis, Academic Press, New York, 1974, 245-272.

\bibitem{M} R. Mazzeo, Elliptic theory of differential edge operators, Comm. in P.D.E. 16 (1991), 1615-1664.

\bibitem{MP} R. Mazzeo,  F. Pacard, A construction of singular solutions for a semilinear elliptic equation using asymptotic analysis, J. Differential Geom. 44 (1996), no. 2, 331-370.

\bibitem{MP2} R. Mazzeo,  F. Pacard, Constant scalar curvature metrics with isolated singularities, Duke Math. J. 99 (1999), no. 3, 353-418

\bibitem{MS} R. Mazzeo,  N. Smale, Conformally flat metrics of constant positive scalar curvature on subdomains of the sphere, J. Differential Geom. 34 (1991), no. 3, 581-621

\bibitem{Sc} R. Schoen, Variational theory for the total scalar curvature functional for Riemannian metrics and related topics, Topics in calculus of variations (Montecatini Terme, 1987), 120-154, Lecture Notes in Math., 1365, Springer, Berlin, 1989

\bibitem{Schoen 1988} R. Schoen, The existence of weak solutions with prescribed singular behavior for a conformally invariant scalar equation, Comm. Pure Appl. Math. 41 (1988) 317-392.

\bibitem{Schoen-Yau 1988} R. Schoen, S. T. Yau, Conformally flat manifolds, Kleinian groups and scalar curvature, Invent. Math. 92 (1988) 47-71.

\bibitem{Tan89} S. Tanno, Variational problems on contact Riemannian manifolds, Trans. Amer. Math. Soc. 314, no.1, 349-379, (1989).


\end{thebibliography}
\end{document}